\newtheorem{thm}{Theorem}[section]
\newtheorem{prop}[thm]{Proposition}
\newtheorem{lem}[thm]{Lemma}
\newtheorem{cor}[thm]{Corollary}
\theoremstyle{definition}
\newcommand{\Z}{\mathbb{Z}}
\newcommand{\M}{\mathcal{M}}
\newcommand{\I}{\mathcal{I}}
\newcommand{\C}{\mathcal{C}}
\newcommand{\Pu}{\mathcal{P}}
\newcommand{\F}{\mathcal{F}}
\author[R. Kobayashi]{Ryoma Kobayashi}
\address{
Department of General Education,\endgraf
Ishikawa National College of Technology,\endgraf
Tsubata, Ishikawa, 929-0392, Japan
}
\email{kobayashi\_ryoma@ishikawa-nct.ac.jp}
\begin{document}

\title[A normal generating set for $\I(N_g^b)$]
{A normal generating set for the Torelli group of a compact non-orientable surface}

\maketitle

\begin{abstract}
For a compact surface $S$, let $\I(S)$ denote the \textit{Torelli group} of $S$.
For a compact orientable surface $\Sigma$, $\I(\Sigma)$ is generated by two types of mapping classes, called \textit{bounding simple closed curve maps} (BSCC maps) and \textit{bounding pair maps} (BP maps) (see \cite{Po} and \cite{Pu}).
For a non-orientable closed surface $N$, $\I(N)$ is generated by BSCC maps and BP maps (see \cite{HK}).
In this paper, we give an explicit normal generating set for $\I(N_g^b)$, where $N_g^b$ is a genus-$g$ compact non-orientable surface with $b$ boundary components for $g\geq4$ and $b\geq1$.
\end{abstract}

\section{Introduction}

For $g\geq1$ and $b\geq0$, let $N_g^b$ denote a genus-$g$ compact connected non-orientable surface with $b$ boundary components, and let $N_g=N_g^0$.
In this paper, we regard $N_g^b$ as a surface obtained by attaching $g$ M\"obius bands to a sphere with $g+b$ boundary components, as shown in Figure~\ref{non-ori}.
We call each of these M\"obius bands attached to this sphere a \textit{cross cap}.
The \textit{mapping class group} $\M(N_g^b)$ of $N_g^b$ is the group consisting of isotopy classes of all diffeomorphisms over $N_g^b$ which fix each point of the boundary.
The \textit{Torelli group} $\I(N_g^b)$ of $N_g^b$ is the subgroup of $\M(N_g^b)$ consisting of elements acting trivially on the integral first homology group $H_1(N_g^b;\Z)$ of $N_g^b$.
The Torelli group of a compact orientable surface is generated by two types of mapping classes, called \textit{bounding simple closed curve maps} (BSCC maps) and \textit{bounding pair maps} (BP maps) (see \cite{Po} and \cite{Pu}).
In particular, Johnson \cite{J} showed that the Torelli group of an orientable closed surface is finitely generated by BP maps.
Hirose and the author \cite{HK} showed that $\I(N_g)$ is generated by BSCC maps and BP maps for $g\geq4$.
In this paper, we give an explicit normal generating set for $\I(N_g^b)$ consisting of BSCC maps and BP maps, for $g\geq4$ and $b\geq1$.

\begin{figure}[htbp]
\includegraphics[scale=0.5]{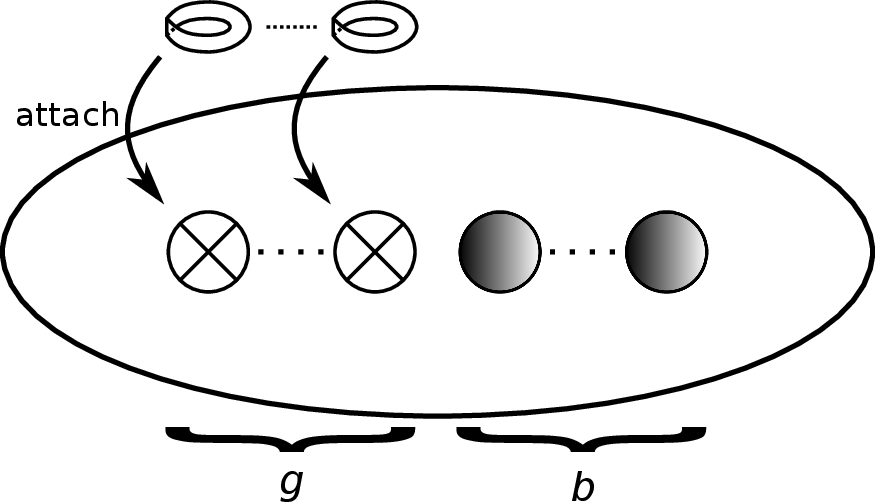}
\caption{A genus-$g$ compact connected non-orientable surface with $b$ boundary components.}\label{non-ori}
\end{figure}

We remark that there are other definitions of the Torelli group for surfaces with boundary (see \cite{Pu}).

Let $N$ be a compact connected non-orientable surface.
For a simple closed curve $c$ on $N$, we call $c$ an \textit{$A$-circle} (resp. an \textit{$M$-circle}) if its regular neighborhood is an annulus (resp. a M\"obius band), as shown in Figure~\ref{a-m}.
For an $A$-circle $c$, we can define the mapping class $t_c$, called the \textit{Dehn twist} about $c$, and the direction of the twist is indicated by a small arrow written beside $c$ as shown in Figure~\ref{dehn}.
We can notice that an $A$-circle (resp. an $M$-circle) passes through the cross caps even (resp. odd) number of times.

\begin{figure}[htbp]
\subfigure[A-circles.]{\includegraphics[scale=0.5]{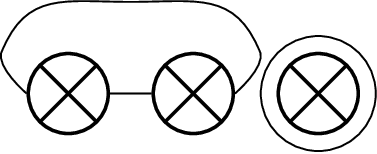}}\hspace{1cm}
\subfigure[M-circles.]{\includegraphics[scale=0.5]{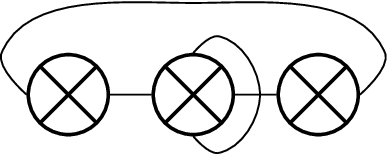}}
\caption{}\label{a-m}
\end{figure}

\begin{figure}[htbp]
\includegraphics[scale=0.5]{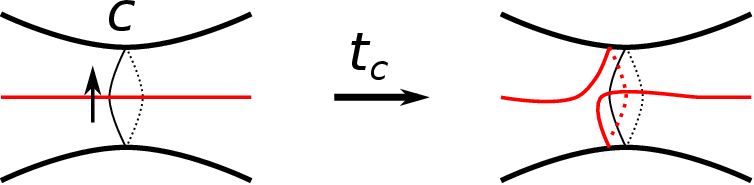}
\caption{The Dehn twist $t_c$ about $c$.}\label{dehn}
\end{figure}

Let $\alpha$, $\beta$, $\beta^\prime$, $\gamma$, $\delta_i$, $\rho_i$, $\sigma_{ij}$ and $\bar{\sigma}_{ij}$ be simple closed curves on $N_g^b$ as shown in Figure~\ref{loops}.
The main result of this paper is as follows.

\begin{thm}\label{main-thm}
Let $g\geq5$ and $b\geq0$.
In $\M(N_g^b)$, $\I(N_g^b)$ is normally generated by $t_\alpha$, $t_\beta{}t_{\beta^\prime}^{-1}$, $t_{\delta_i}$, $t_{\rho_i}$, $t_{\sigma_{ij}}$ and $t_{\bar{\sigma}_{ij}}$ for $1\leq{i,j}\leq{b-1}$ with $i<j$.
In $\M(N_4^b)$, $\I(N_4^b)$ is normally generated by $t_\alpha$, $t_\beta{}t_{\beta^\prime}^{-1}$, $t_{\delta_i}$, $t_{\rho_i}$, $t_{\sigma_{ij}}$, $t_{\bar{\sigma}_{ij}}$ and $t_\gamma$ for $1\leq{i,j}\leq{b-1}$ with $i<j$.
\end{thm}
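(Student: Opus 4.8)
The plan is to argue by induction on the number $b$ of boundary components, using the homomorphism $\Phi\colon\M(N_g^b)\to\M(N_g^{b-1})$ obtained by gluing a disk to one boundary component. The base case $b=0$ is the theorem of Hirose and the author \cite{HK}, which asserts that $\I(N_g)$ is generated by all BSCC maps and BP maps for $g\geq4$. To pass from this full generating set to the short list in the statement, I would first invoke the change-of-coordinates principle: $\M(N_g^b)$ acts transitively on the isotopy classes of separating $A$-circles and of bounding pairs of each fixed topological type, so every BSCC map is conjugate to a twist about a model separating $A$-circle of one of finitely many types, and every BP map is conjugate to one of finitely many model BP maps. It then suffices to show each model twist lies in the normal closure of the listed generators, using lantern relations to trade a higher-genus separating twist for products of the lower ones; the curves $\alpha,\beta,\beta^\prime,\delta_i,\rho_i,\sigma_{ij},\bar{\sigma}_{ij}$ (and $\gamma$ when $g=4$) are chosen precisely as representatives of these types.

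For the inductive step I would verify that $\Phi$ carries $\I(N_g^b)$ onto $\I(N_g^{b-1})$ and then analyze its kernel. By the non-orientable Birman exact sequence, $\ker\Phi$ is the disk-pushing subgroup, generated by the boundary twist $t_\partial$ about the capped boundary together with the point-pushing maps $\mathrm{Push}(x)$ for $x\in\pi_1(N_g^{b-1})$. A push map around a two-sided simple loop is a BP map $t_c{}t_{c^\prime}^{-1}$ and hence lies in $\I$, while $t_\partial$ is a BSCC map; both act trivially on $H_1$. The core of the argument is to show that $\ker\Phi\cap\I(N_g^b)$ is normally generated by the boundary-adapted generators $t_{\delta_i},t_{\rho_i},t_{\sigma_{ij}},t_{\bar{\sigma}_{ij}}$. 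Concretely, one rewrites an arbitrary Torelli push map in terms of these using lantern relations together with the transitivity of $\M(N_g^b)$ on configurations of arcs and curves meeting the new boundary, so that the added boundary component contributes exactly the twists indexed by $1\leq i,j\leq b-1$.

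The hard part will be the kernel computation above, which is genuinely more delicate than in the orientable case for two reasons. First, $\pi_1(N_g^{b-1})$ contains one-sided loops, and the push map around a one-sided loop is, up to a boundary twist, a crosscap slide, which acts nontrivially on $H_1(\cdot;\Z)$ and is \emph{not} a Torelli element; so I must pin down exactly which elements of the disk-pushing subgroup lie in $\I(N_g^b)$ --- I expect this to be the preimage of the kernel of $\pi_1(N_g^{b-1})\to H_1(N_g^{b-1};\Z)$ enlarged by the squares of one-sided classes --- and then normally generate that subgroup by the boundary twists. Second, the low-genus case $g=4$ requires care: the connectivity underlying the change-of-coordinates and lantern reductions degenerates, which is precisely why the single extra twist $t_\gamma$ must be appended to the generating set, mirroring the hypothesis $g\geq4$ in \cite{HK}. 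Once these two points are settled, assembling the induction with the base case yields the stated normal generating sets.
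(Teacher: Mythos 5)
Your overall skeleton---induction on $b$, the base case from \cite{HK}, and a Birman-type exact sequence for the capping map---is the same as the paper's. But the step you yourself call the hard part, namely deciding exactly which disk-pushing elements lie in $\I(N_g^b)$, is resolved incorrectly, and this is precisely the technical core of the paper (Propositions~\ref{im-cap-torelli-n-c-2} and~\ref{p-g}). The root error is conflating acting trivially on $H_1(N_g^{b-1};\Z)$ with acting trivially on $H_1(N_g^b;\Z)$. In $N_g^b$ the class $d_b$ of the capped boundary component is \emph{not} null-homologous: from $2(c_1+\cdots+c_g)+(d_1+\cdots+d_b)=0$ one gets $d_b\neq0$ in $H_1(N_g^b;\Z)\cong\Z^{g+b-1}$, unlike the orientable situation. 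Consequently a push around a two-sided simple loop, viewed as an element of $\M(N_g^b)$, is in general \emph{not} a Torelli element: the push along $x_ix_j$ lifts to $\tau_{ij}=t_{\gamma_{ij}}t_{\widetilde{\gamma}_{ij}}$, which sends $c_i\mapsto c_i-d_b$ and $c_j\mapsto c_j+d_b$. So your claim that such pushes ``lie in $\I$'' is false, and your guessed characterization of the Torelli pushes---the subgroup generated by $\ker\bigl(\pi_1(N_g^{b-1},\ast)\to H_1(N_g^{b-1};\Z)\bigr)$ together with squares of one-sided classes---fails in both directions. The commutator $[x_1,x_2]$ lies in your subgroup, but its lift acts by $c_1\mapsto c_1-2d_b$, hence is not in $\I(N_g^b)$. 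Conversely, for $b\geq3$ the boundary loop $y_j$ does not lie in your subgroup (that subgroup is the preimage of $2H_1(N_g^{b-1};\Z)$, and $d_j\notin2H_1$), yet its lift $t_{\sigma_{jb}}t_{\delta_j}^{-1}$ is a product of two BSCC maps and hence is Torelli. The correct subgroup is the paper's $\Gamma_g^{b-1}$, cut out by the positional condition $O_i(x)=E_i(x)$, which is not determined by the homology class of $x$ at all.

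Even granting the correct characterization, your sketch has no substitute for the two remaining pieces. First, normally generating $\Gamma_g^{b-1}$ (Proposition~\ref{p-g}) is done in the paper by Reidemeister--Schreier: $\Gamma_g^{b-1}$ is identified with the kernel of the map onto the presented group $\pi_g^{b-1}$, whose relators may be taken to be $x_i^2$, $y_j$ and $(x_ix_jx_k)^2$, and the decisive geometric input is that representative loops of $x_i^2$ and $(x_ix_jx_k)^2$ bound M\"obius bands, so all of those pushes are conjugate to the single element $\Pu_g^{b-1}(x_g^2)$, whose lift is $t_{\rho_b}$; ``lantern relations together with transitivity on configurations'' is not an argument for this. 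Second, your induction would naturally output generators attached to the new boundary component, $t_{\delta_b}$, $t_{\rho_b}$, $t_{\sigma_{kb}}$, $t_{\bar{\sigma}_{kb}}$, whereas the theorem's index range stops at $b-1$; the paper needs a separate closing lemma with explicit product formulas (in the twists $a_{i,j}$, $b_{j,k}$, $c_{k,l}$ and their $d_m$-conjugates) to show these are redundant, while you simply assert that the new boundary ``contributes exactly the twists indexed by $1\leq i,j\leq b-1$'' without a mechanism. Finally, note that surjectivity of $\I(N_g^b)\to\I(N_g^{b-1})$ (Proposition~\ref{im-cap-torelli-n-c-1}) is also not automatic, for the same $d_b$-reason: a lift of a Torelli element need not be Torelli and must be corrected by powers of the $\tau_{ig}$; your plan at least names this task, but it is another place where the boundary class has to be handled explicitly.
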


\begin{figure}[htbp]
\subfigure[Loops $\alpha$, $\beta$, $\beta^\prime$, $\delta_i$ and $\bar{\sigma}_{ij}$.]{\includegraphics[scale=0.5]{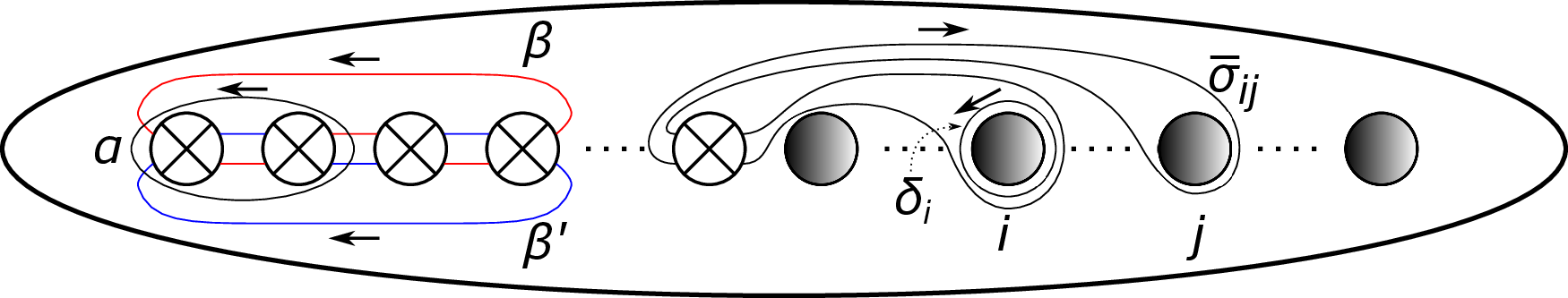}}
\subfigure[Loops $\gamma$, $\rho_i$ and $\sigma_{ij}$.]{\includegraphics[scale=0.5]{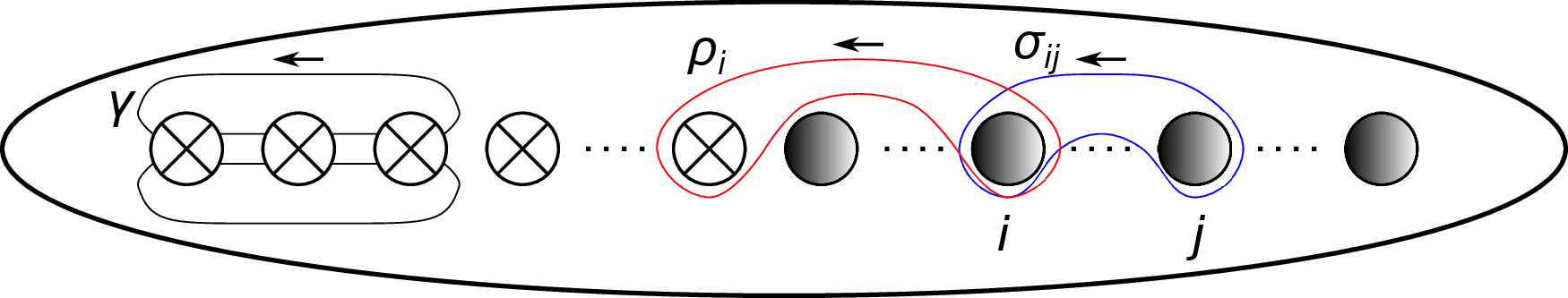}}
\caption{}\label{loops}
\end{figure}

In this paper, for $f,g\in\M(N_g^b)$, the composition $gf$ means that we first apply $f$ and then $g$.

\section{Basics on mapping class groups for non-orientable surfaces}

\subsection{On mapping class groups for non-orientable surfaces}\

Mapping class groups for orientable surfaces are generated by only Dehn twists.
Lickorish showed that $\M(N_g)$ is generated by Dehn twists and \textit{$Y$-homeomorphisms} and that the subgroup of $\M(N_g)$ generated by all Dehn twists is an index $2$ subgroup of $\M(N_g)$ (see \cite{L1,L2}).
Hence $\M(N_g)$ is not generated by only Dehn twists.
On the other hand, since a $Y$-homeomorphism acts trivially on $H_1(N_g;\Z/{2\Z})$, $\M(N_g)$ is not generated by only $Y$-homeomorphisms.

Chillingworth \cite{C} found a finite generating set for $\M(N_g)$.
$\M(N_1)$ and $\M(N_1^1)$ are trivial (see \cite{E}).
Finite presentations for $\M(N_2)$, $\M(N_2^1)$, $\M(N_3)$ and $\M(N_4)$ are obtained by \cite{L1}, \cite{St1}, \cite{BC} and \cite{Sz}, respectively.
Paris-Szepietowski \cite{PS} obtained a finite presentation of $\M(N_g^b)$  for $b=0,1$ and $g+b>3$.

Let $N$ be a non-orientable surface, and let $a$ and $m$ be an oriented $A$-circle and an $M$-circle on $N$ respectively such that $a$ and $m$ intersect each other transversely at only one point.
We now define a $Y$-homeomorphism $Y_{m,a}$.
Let $K$ be a regular neighborhood of $a\cup{b}$ in $N$, and let $M$ be a regular neighborhood of $m$ in the interior of $K$.
We can see that $K$ is homeomorphic to the Klein bottle with one boundary component.
$Y_{m,a}$ is defined as the isotopy class of a diffeomorphism over $N$ which is described by pushing $M$ once along $a$ and which fixes each point of the boundary and the exterior of $K$ (see Figure~\ref{y-homeo}).

\begin{figure}[htbp]
\includegraphics[scale=0.5]{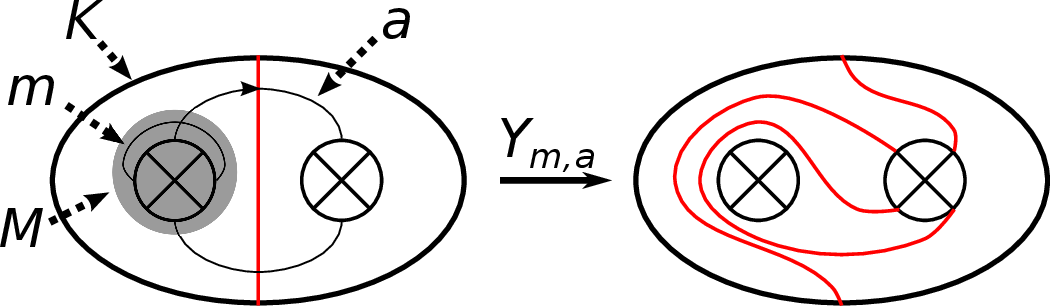}
\caption{The $Y$-homoemorphism $Y_{m,a}$.}\label{y-homeo}
\end{figure}

\subsection{On Torelli groups for non-orientable surfaces}\

Let $c$ be an $A$-circle on a non-orientable surface $N$ such that $N\setminus{c}$ is not connected.
We call $t_c$ a \textit{bounding simple closed curve map}, for short a \textit{BSCC map}.
For example, in Theorem~\ref{main-thm}, $t_\alpha$, $t_\gamma$, $t_{\delta_i}$, $t_{\rho_i}$, $t_{\sigma_{ij}}$ and $t_{\bar{\sigma}_{ij}}$ are BSCC maps.
Let $c_1$ and $c_2$ be $A$-circles on $N$ such that $N\setminus{c_i}$ is connected, $N\setminus(c_1\cup{}c_2)$ is not connected and one of its connected components is an orientable surface with two boundary components.
We call $t_{c_1}t_{c_2}^{-1}$ a \textit{bounding pair map}, for short a \textit{BP map}, where $t_{c_1}$ and $t_{c_2}$ are right-handed twists for some orientation of the orientable component of $N\setminus(c_1\cup{}c_2)$.
For example, in Theorem~\ref{main-thm}, $t_\beta{}t_{\beta^\prime}^{-1}$ is a BP map.

Hirose and the author \cite{HK} obtained the following theorem.
\begin{thm}[\cite{HK}]\label{pre-thm}
For $g\geq5$, $\I(N_g)$ is normally generated by $t_\alpha$ and $t_\beta{}t_{\beta^\prime}^{-1}$ in $\M(N_g)$.
$\I(N_4)$ is normally generated by $t_\alpha$, $t_\beta{}t_{\beta^\prime}^{-1}$ and $t_\gamma$ in $\M(N_4)$.
\end{thm}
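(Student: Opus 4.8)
The plan is to prove Theorem~\ref{main-thm} by induction on the number $b$ of boundary components, using Theorem~\ref{pre-thm} as the base case $b=0$. The inductive step will rest on a cut--and--cap exact sequence relating $\I(N_g^b)$ to $\I(N_g^{b-1})$. Specifically, let $d$ be the boundary component of $N_g^b$ that we wish to cap, and let $c$ be a simple closed curve parallel to $d$ lying slightly inside the surface. Capping $d$ with a disk (after fixing a marked point, or a disk with an extra boundary, to keep track of the framing) yields a surjection $\M(N_g^b)\to\M(N_g^{b-1})$ that restricts to a map $\I(N_g^b)\to\I(N_g^{b-1})$; I would identify the kernel using the Birman exact sequence for the capping map, whose kernel is generated by a Dehn twist $t_d$ about the boundary and by point-pushing subgroups (a surface braid / fundamental-group contribution). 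The image of the induced map on Torelli groups contains, by the inductive hypothesis, the classes $t_\alpha$, $t_\beta t_{\beta'}^{-1}$, and $t_\gamma$ in the $N_4$ case, so I would arrange the curves in Figure~\ref{loops} so that capping the last boundary carries the listed generators for $N_g^b$ onto those for $N_g^{b-1}$.

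The key steps, in order, are as follows. \textbf{Step 1.} Set up the capping/forgetting homomorphism $\pi\colon\M(N_g^b)\to\M(N_g^{b-1})$ and verify it sends Torelli to Torelli, so that it restricts to $\bar\pi\colon\I(N_g^b)\to\I(N_g^{b-1})$; compute the effect on homology to confirm which of the candidate generators lie in $\I(N_g^b)$. \textbf{Step 2.} Determine the kernel of $\bar\pi$ via the Birman exact sequence: it will be generated by the boundary twist $t_d$ (which is itself a BSCC map, appearing as one of the $t_{\delta_i}$, $t_{\rho_i}$, $t_{\sigma_{ij}}$, or $t_{\bar\sigma_{ij}}$ after suitable isotopy) together with point-pushing maps along loops in $N_g^{b-1}$. \textbf{Step 3.} Show that each point-pushing element lying in the Torelli group is a product of conjugates of the listed BSCC and BP maps; pushing a point around a loop $\ell$ decomposes as $t_{\ell_1}t_{\ell_2}^{-1}$ where $\ell_1,\ell_2$ are the two boundary curves of an annular neighborhood of $\ell$, and for those $\ell$ on which the push acts trivially on homology this product is exactly a BP or BSCC map of the required type, or a conjugate thereof. \textbf{Step 4.} Combine: given $f\in\I(N_g^b)$, apply the inductive hypothesis to write $\bar\pi(f)$ as a product of $\M(N_g^{b-1})$-conjugates of the lower generators, lift each conjugating element to $\M(N_g^b)$, and correct the discrepancy — which lies in $\ker\bar\pi$ — using Steps 2 and 3, so that $f$ itself is a product of $\M(N_g^b)$-conjugates of the listed generators.

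The main obstacle I expect is \textbf{Step 3}, controlling the point-pushing subgroup inside the Torelli group. The full point-pushing subgroup is isomorphic to $\pi_1$ of the once-punctured surface, which is large and non-abelian, so I must carefully isolate precisely which pushing elements act trivially on $H_1(N_g^b;\Z)$ and then express each such element as a word in $\M(N_g^b)$-conjugates of the finitely many generators $t_{\delta_i}$, $t_{\rho_i}$, $t_{\sigma_{ij}}$, $t_{\bar\sigma_{ij}}$. Because $N_g^b$ is non-orientable, a push around a one-sided loop ($M$-circle) produces a $Y$-homeomorphism rather than a product of Dehn twists, and such a push need not be a BP/BSCC map; I will need to verify that every Torelli point-push reduces to the two-sided ($A$-circle) case, where it becomes a genuine BP map, so that no $Y$-homeomorphisms are needed. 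A secondary difficulty is lifting conjugators compatibly across the Birman sequence and checking that the resulting correction term genuinely lands in the subgroup generated by the new boundary-related generators; I would handle this by choosing the curves $\sigma_{ij}$, $\bar\sigma_{ij}$, $\delta_i$, $\rho_i$ in Figure~\ref{loops} to be the images under capping of a compatible collection, and by appealing to standard lantern-type and chain relations to rewrite the relevant conjugates.
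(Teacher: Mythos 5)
Your proposal does not prove the statement at hand, and the gap is structural rather than technical: the statement is Theorem~\ref{pre-thm}, which concerns the \emph{closed} surface $N_g=N_g^0$ --- it asserts that $\I(N_g)$ is normally generated by $t_\alpha$ and $t_\beta t_{\beta^\prime}^{-1}$ in $\M(N_g)$ for $g\geq5$ (with $t_\gamma$ added when $g=4$). Your plan is an induction on the number $b$ of boundary components that takes ``Theorem~\ref{pre-thm} as the base case $b=0$.'' As a proof of Theorem~\ref{pre-thm} this is circular: you assume exactly the statement you were asked to establish. No capping or Birman-exact-sequence induction can reach the $b=0$ case from below, since the closed surface is the terminal object of the capping process, not an intermediate one. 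What you have sketched is instead a proof of Theorem~\ref{main-thm}, and indeed the paper proves Theorem~\ref{main-thm} in essentially the way you describe (the capping homomorphism $\C_g^b$, the pushing homomorphism $\Pu_g^{b-1}$, identification of the Torelli point-pushes as $\Pu_g^{b-1}(\Gamma_g^{b-1})$, and a Reidemeister--Schreier computation of normal generators); but in the paper's logical architecture Theorem~\ref{pre-thm} is an \emph{input} to that induction, imported wholesale from \cite{HK}, not an output of it.

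A genuine proof of Theorem~\ref{pre-thm} requires an argument intrinsic to the closed non-orientable surface, and none of your four steps supplies one. The argument of \cite{HK} runs along quite different lines: one uses the fact that $\I(N_g)$ is contained in the level-$2$ subgroup of $\M(N_g)$, Szepietowski's theorem that this level-$2$ subgroup is generated by $Y$-homeomorphisms (crosscap slides), and then explicit computations expressing the relevant products of $Y$-homeomorphisms acting trivially on $H_1(N_g;\Z)$ as products of conjugates of $t_\alpha$ and $t_\beta t_{\beta^\prime}^{-1}$ (with $t_\gamma$ unavoidable when $g=4$). In particular the $Y$-homeomorphism difficulty you flag in your Step 3 is not a side issue to be ``reduced away'' --- in the closed case it is the heart of the matter, since $\M(N_g)$ is not generated by Dehn twists and the Torelli group must be accessed through the crosscap-slide subgroup. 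If you intend your write-up to stand as a proof of Theorem~\ref{main-thm} instead, the skeleton is sound and close to the paper's, but you would still need to replace your informal Step 3 with the paper's precise bookkeeping: only pushes $\Pu_g^{b-1}(x)$ with $p(x)$ of even word length lift under $\C_g^b$, the Torelli pushes form the subgroup $\Gamma_g^{b-1}$ determined by the conditions $O_i(x)=E_i(x)$, and the normal generators $x_g^2$, $y_j$, $x_gy_jx_g^{-1}$, $(x_ix_jx_k)^2$ are extracted by Reidemeister--Schreier, with the squares of one-sided loops bounding M\"obius bands and hence giving the twists $t_{\rho_b}$ --- not $Y$-homeomorphisms.
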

Theorem~\ref{main-thm} is a natural extension of Theorem~\ref{pre-thm}.

We can check that all BSCC maps and BP maps are in $\I(N_g^b)$.
In addition, by Theorem~\ref{main-thm}, we have that $\I(N_g^b)$ is generated by BSCC maps and BP maps.
We do not know whether or not $\I(N_g^b)$ can be finitely generated.

\subsection{ Capping, Pushing and Forgetful homomorphisms}\

Let $N$ be a compact non-orientable surface.
Take a point $\ast$ in the interior of $N$.
Let $\M(N,\ast)$ denote the group consisting of isotopy classes of all diffeomorphisms over $N$  which fix $\ast$ and each point of the boundary, and let $\I(N,\ast)$ denote the subgroup of $\M(N,\ast)$ consisting of elements acting trivially on $H_1(N;\Z)$

Take a point $\ast$ in the interior of $N_g^{b-1}$.
We regard $N_g^b$ as a subsurface of $N_g^{b-1}$ not containing $\ast$.
The natural embedding $N_g^b\hookrightarrow{}N_g^{b-1}$ induces the homomorphism $\C_g^b:\M(N_g^b)\to\M(N_g^{b-1},\ast)$, called the \textit{capping homomorphism}.
We have the following lemma.

\begin{lem}[cf. \cite{FM,St2}]\label{kernel-c}
$\ker\C_g^b$ is generated by $t_{\delta_b}$.
\end{lem}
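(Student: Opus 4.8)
The plan is to recognize $\C_g^b$ as a capping homomorphism and to identify its kernel through the associated Birman-type exact sequence, in the spirit of \cite{FM,St2}. Geometrically, $N_g^{b-1}$ is obtained from $N_g^b$ by gluing a disk $D$ to the boundary component bounded by $\delta_b$ and placing $\ast$ at the centre of $D$; thus $\delta_b$ becomes a simple closed curve bounding the once-marked disk $D$ in $(N_g^{b-1},\ast)$. The two easy inclusions come first. For surjectivity of $\C_g^b$, any diffeomorphism of $N_g^{b-1}$ fixing $\ast$ can be isotoped to fix a disk neighbourhood of $\ast$ pointwise, hence lies in the image. For $\langle t_{\delta_b}\rangle\subseteq\ker\C_g^b$, note that $\delta_b$ is boundary-parallel, hence a two-sided $A$-circle, so $t_{\delta_b}$ is defined; its image $\C_g^b(t_{\delta_b})$ is the twist about a curve bounding a disk that contains only the marked point $\ast$, which is isotopic to the identity rel $\ast$. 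Since $t_{\delta_b}$ is a twist about a boundary-parallel curve it is central in $\M(N_g^b)$, so $\langle t_{\delta_b}\rangle$ is automatically normal.

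For the reverse inclusion $\ker\C_g^b\subseteq\langle t_{\delta_b}\rangle$ I would pass to diffeomorphism groups and use the standard reformulation that fixing $\ast$ together with a tangent direction $\vec v$ at $\ast$ is the same as fixing the circle $\delta_b$ pointwise; blowing up $\ast$ to this circle gives $\M(N_g^b)\cong\pi_0\mathrm{Diff}(N_g^{b-1};\ast,\vec v)$. Forgetting the framing $\vec v$ yields an $S^1$-fibration
$$\mathrm{Diff}(N_g^{b-1};\ast,\vec v)\longrightarrow\mathrm{Diff}(N_g^{b-1};\ast)\longrightarrow S^1,$$
whose homotopy long exact sequence restricts to
$$\pi_1(S^1)\longrightarrow\pi_0\mathrm{Diff}(N_g^{b-1};\ast,\vec v)\xrightarrow{\ \C_g^b\ }\pi_0\mathrm{Diff}(N_g^{b-1};\ast)\longrightarrow1,$$
that is, $\Z\to\M(N_g^b)\xrightarrow{\C_g^b}\M(N_g^{b-1},\ast)\to1$. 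The image of the generator of $\pi_1(S^1)$, a full rotation of the framing, is exactly $t_{\delta_b}$, so exactness gives $\ker\C_g^b=\langle t_{\delta_b}\rangle$ in one stroke.

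The main obstacle is justifying exactness and the injectivity of $\Z\to\M(N_g^b)$ in the non-orientable setting, where one cannot quote the orientable Earle--Eells theorem directly. The key input is that for a compact surface with $\chi<0$ the identity component $\mathrm{Diff}_0$ is contractible, which in the non-orientable case is due to Gramain; this applies here since $\chi(N_g^{b-1})=3-g-b<0$ in the range $g\geq4$ under consideration. Contractibility forces $\pi_1\mathrm{Diff}(N_g^{b-1};\ast)=0$ (via the point-evaluation fibration together with the vanishing of $\pi_2$ of an aspherical surface), whence the connecting map $\pi_1(S^1)\to\pi_0$ is injective with infinite cyclic image; one also checks that $\delta_b$ is essential, so $t_{\delta_b}$ has infinite order, matching $\pi_1(S^1)\cong\Z$. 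Alternatively, the whole fibration computation can be bypassed by transporting the capping exact sequence of \cite{St2} along the identification $\M(N_g^b)\cong\pi_0\mathrm{Diff}(N_g^{b-1};\ast,\vec v)$.
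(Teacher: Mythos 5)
Your overall strategy (the Farb--Margalit-style fibration/Birman argument) is the standard route to this lemma, and the paper itself offers no proof --- it simply cites \cite{FM,St2} --- but your write-up contains a genuine error that is exactly the non-orientable subtlety the citation to \cite{St2} is meant to cover. You claim (i) that $\C_g^b$ is surjective, and (ii) that $\M(N_g^b)\cong\pi_0\mathrm{Diff}(N_g^{b-1};\ast,\vec v)$, where only a tangent direction at $\ast$ is fixed. Both are false, for the same reason: the sign of $\det df_\ast$ is constant along any isotopy fixing $\ast$, and on a non-orientable surface $\M(N_g^{b-1},\ast)$ contains classes with $\det df_\ast<0$ (for instance the point-pushing map $\Pu_g^{b-1}(x_i)$ along an orientation-reversing loop). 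Such a class has no representative equal to the identity near $\ast$, hence is not in the image of $\C_g^b$; indeed the paper states explicitly that $\C_g^b$ is \emph{not} surjective, and that $\Pu_g^{b-1}(x)$ lifts only when $p(x)$ has even word length. Claim (ii) fails likewise: a diffeomorphism can fix $\vec v$ yet reflect the tangent plane across it, and after blowing up $\ast$ such a map reflects the new boundary circle rather than fixing it pointwise, so $\pi_0\mathrm{Diff}(N_g^{b-1};\ast,\vec v)$ is strictly bigger than $\M(N_g^b)$. Consequently your exact sequence computes the kernel of the wrong map: the surjection $\pi_0\mathrm{Diff}(N_g^{b-1};\ast,\vec v)\to\pi_0\mathrm{Diff}(N_g^{b-1};\ast)$ is not $\C_g^b$.

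The argument can be repaired by fixing the full derivative at $\ast$ (equivalently, a disk neighbourhood of $\ast$) instead of a single direction. The relevant base is then the space of derivatives $GL_2(\mathbb{R})\simeq O(2)$ rather than $S^1$: its $\pi_0\cong\Z/2$ records the local orientation behaviour at $\ast$ and explains the failure of surjectivity (the image of $\C_g^b$ is the index-two subgroup of $\M(N_g^{b-1},\ast)$ preserving local orientation at $\ast$), while the $\pi_1$ of its identity component is $\Z$, generated by the full rotation, whose image under the connecting homomorphism is precisely $t_{\delta_b}$. Your remaining inputs are sound --- Gramain's contractibility of $\mathrm{Diff}_0$ for $\chi<0$, asphericity of $N_g^{b-1}$, and the evaluation fibration give $\pi_1\mathrm{Diff}(N_g^{b-1};\ast)=0$ --- and with the corrected fibration, exactness yields $\ker\C_g^b=\langle t_{\delta_b}\rangle$. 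Note also that your proposed shortcut in the last sentence (transporting the sequence of \cite{St2} along the identification in (ii)) relies on the same false identification, so it does not rescue the proof as written.
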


Since $t_{\delta_b}$ is in $\I(N_g^b)$ we obtain the following.

\begin{cor}\label{kernel-c-res}
$\ker\C_g^b|_{\I(N_g^b)}$ is generated by $t_{\delta_b}$.
\end{cor}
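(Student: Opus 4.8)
The plan is to obtain Corollary~\ref{kernel-c-res} as a direct formal consequence of Lemma~\ref{kernel-c}, using the elementary fact that the kernel of a restricted homomorphism is the intersection of the full kernel with the subgroup in question. First I would record the identity
\[
\ker\left(\C_g^b|_{\I(N_g^b)}\right) = \ker\C_g^b \cap \I(N_g^b),
\]
so that the whole task reduces to understanding how the generator supplied by Lemma~\ref{kernel-c} sits relative to the Torelli group. Since Lemma~\ref{kernel-c} identifies $\ker\C_g^b$ with the cyclic subgroup $\langle t_{\delta_b}\rangle$, the problem becomes the computation of $\langle t_{\delta_b}\rangle \cap \I(N_g^b)$.

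The key step is to note that the single generator $t_{\delta_b}$ already lies in $\I(N_g^b)$. This is precisely the remark made just before the statement: $\delta_b$ is an $A$-circle parallel to the $b$-th boundary component, hence a BSCC curve, so $t_{\delta_b}$ is a BSCC map and acts trivially on $H_1(N_g^b;\Z)$. Because membership in $\I(N_g^b)$ is closed under taking powers and inverses, it follows that the entire cyclic group $\langle t_{\delta_b}\rangle$ is contained in $\I(N_g^b)$. Combining this with the identity above, the intersection $\langle t_{\delta_b}\rangle \cap \I(N_g^b)$ equals $\langle t_{\delta_b}\rangle$ in full, and therefore $\ker\left(\C_g^b|_{\I(N_g^b)}\right)$ is generated by $t_{\delta_b}$, as asserted.

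The hard part here is genuinely minimal: the entire content is carried by Lemma~\ref{kernel-c}, and the only additional input needed is the already-verified fact that the boundary-parallel twist $t_{\delta_b}$ is a BSCC map acting trivially on homology. Once that places the generator of $\ker\C_g^b$ inside the Torelli group, the intersection is trivial to evaluate, so I do not anticipate any real obstacle.
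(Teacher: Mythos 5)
Your proposal is correct and follows exactly the paper's route: the paper derives the corollary from Lemma~\ref{kernel-c} with the single observation that $t_{\delta_b}$ lies in $\I(N_g^b)$, which is precisely your argument spelled out via the identity $\ker\bigl(\C_g^b|_{\I(N_g^b)}\bigr)=\ker\C_g^b\cap\I(N_g^b)$. No gaps; your write-up just makes explicit the intersection step that the paper leaves implicit.
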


We remark that $\C_g^b$ and $\C_g^b|_{\I(N_g^b)}$ are not surjective.

The \textit{pushing homomorphism} $\Pu_g^{b-1}:\pi_1(N_g^{b-1},\ast)\to\M(N_g^{b-1},\ast)$ is defined as follows.
For $x\in\pi_1(N_g^{b-1},\ast)$ take a representative oriented loop $\tilde{x}$ based at $\ast$.
$\Pu_g^{b-1}(x)$ is described by pushing $\ast$ once along $\tilde{x}$ and fixes each point of exterior of neighborhood of $\tilde{x}$.
Note that $\Pu_g^{b-1}$ is an anti-homomorphism, that is, for $x,y\in\pi_1(N_g^{b-1},\ast)$ we have $\Pu_g^{b-1}(xy)=\Pu_g^{b-1}(y)\Pu_g^{b-1}(x)$.
The \textit{forgetful homomorphism} $\F_g^{b-1}:\M(N_g^{b-1},\ast)\to\M(N_g^{b-1})$ is defined naturally.
Note that $\F_g^{b-1}$ is surjective.

We have the exact sequence
$$\pi_1(N_g^{b-1},\ast)\overset{\Pu_g^{b-1}}{\longrightarrow}\M(N_g^{b-1},\ast)\overset{\F_g^{b-1}}{\longrightarrow}\M(N_g^{b-1})\longrightarrow1.$$
This sequence is called the \textit{Birman exact sequence}, introduced by Birman~\cite{B}.
The pushing homomorphism is injective if the Euler characteristic of $N_g^{b-1}$ is negative.
Since $\textrm{Im}\Pu_g^{b-1}$ is in $\I(N_g^{b-1},\ast)$ and $\F_g^{b-1}(\I(N_g^{b-1},\ast))$ is equal to $\I(N_g^{b-1})$, we have the exact sequence
$$\pi_1(N_g^{b-1},\ast)\longrightarrow\I(N_g^{b-1},\ast)\longrightarrow\I(N_g^{b-1})\longrightarrow1.$$

\section{A normal generating set for $\C_g^b(\I(N_g^b))$ in $\C_g^b(\M(N_g^b))$}

For a map $f:X\to{}Y$ and $y\in{}Y$,  we call $x\in{}X$ a \textit{lift} by $f$ of $y$ if $x$ is in $f^{-1}(y)$.

By the capping homomorphism $\C_g^b:\M(N_g^b)\to\M(N_g^{b-1},\ast)$ we have that a normal generating set for $\I(N_g^b)$ in $\M(N_g^b)$ consists of $t_{\delta_b}$ and lifts by $\C_g^b$ of normal generators of $\C_g^b(\I(N_g^b))$  in $\C_g^b(\M(N_g^b))$.
Thus in this section we consider a normal generating set for $\C_g^b(\I(N_g^b))$  in $\C_g^b(\M(N_g^b))$.

Let $\alpha_i$ and $\beta_j$ be oriented loops on $N_g^{b-1}$ based at $\ast$ as shown in Figure~\ref{xy}, and let $x_i$ and $y_j$ be the elements of $\pi_1(N_g^{b-1},\ast)$ corresponding to $\alpha_i$ and $\beta_j$ respectively.
Note that $\pi_1(N_g^{b-1},\ast)$ is generated by $x_i$ and $y_j$ for $1\leq{i}\leq{g}$ and $1\leq{j}\leq{b-1}$.

\begin{figure}[htbp]
\includegraphics[scale=0.5]{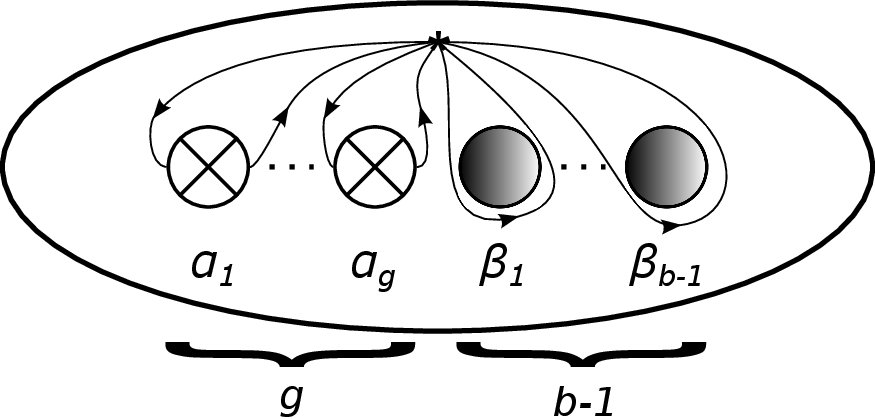}
\caption{The oriented loops $\alpha_i$ and $\beta_j$ on $N_g^{b-1}$.}\label{xy}
\end{figure}

Let $\gamma_i$ and $\delta_j$ be oriented loops on $N_g^b$ as shown in Figure~\ref{cd}, and let $c_i$ and $d_i$ be the elements of $H_1(N_g^b;Z)$ corresponding to $\gamma_i$ and $\delta_j$ respectively.
As a $\Z$-module, $H_1(N_g^b,\Z)$ has the presentation
$$H_1(N_g^b,\Z)=\langle{c_1,\dots,c_g,d_1,\dots,d_b\mid{}2(c_1+\cdots+c_g)+(d_1+\dots+d_b)=0}\rangle,$$
and is isomorphic to $\Z^{g+b-1}$.

\begin{figure}[htbp]
\includegraphics[scale=0.5]{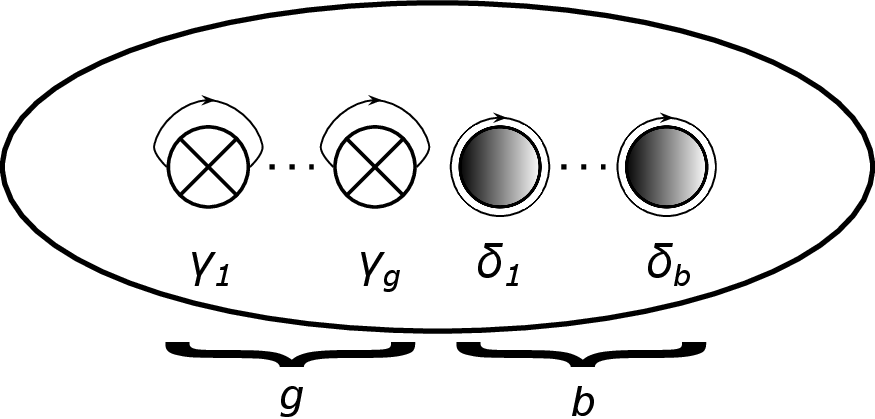}
\caption{The oriented loops $\gamma_i$ and $\delta_j$ on $N_g^b$.}\label{cd}
\end{figure}

Let $p:\pi_1(N_g^{b-1},\ast)\to\pi_1(N_g,\ast)$ denote the natural surjection defined as $p(x_i)=x_i$ and $p(y_j)=1$, for $b\geq1$.
Note that $\Pu_g^{b-1}(x)$ can be lifted by $\C_g^b$ if and only if the word length of $p(x)\in\pi_1(N_g,\ast)$ is even.
Let $\pi_1^+(N_g^{b-1},\ast)$ denote the subgroup of $\pi_1(N_g^{b-1},\ast)$ consisting of $x$ such that the word length of $p(x)$ is even.
For $x\in\pi_1^+(N_g^{b-1},\ast)$, let $x_\ast$ denote the automorphism over $H_1(N_g^b,\Z)$ induced by any lift by $\C_g^b$ of $\Pu_g^{b-1}(x)$.
We can denote $x_\ast(c_i)=c_i+n_id_b$ for some integer $n_i$ and $1\leq{i}\leq{g}$.
Then we define a homomorphism $\theta_g^{b-1}:\pi_1^+(N_g^{b-1},\ast)\to\mathbb{Z}^g$ as $\theta_g^{b-1}(x)=(n_1,n_2,\dots,n_g)$.
Clearly this is a homomorphism.
Let $\Gamma_g^{b-1}=\ker\theta_g^{b-1}$.

In this section we prove the following three propositions.

\begin{prop}\label{im-cap-torelli-n-c-1}
$\mathrm{Im}~\F_g^{b-1}|_{\C_g^b(\I(N_g^b))}$ is equal to $\I(N_g^{b-1})$.
\end{prop}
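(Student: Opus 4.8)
The plan is to prove the two inclusions separately, the inclusion $\mathrm{Im}\,\F_g^{b-1}|_{\C_g^b(\I(N_g^b))}\subseteq\I(N_g^{b-1})$ being routine and the reverse inclusion carrying the real content. For $\subseteq$, I would observe that the composite $\F_g^{b-1}\circ\C_g^b\colon\M(N_g^b)\to\M(N_g^{b-1})$ is exactly the homomorphism induced by the capping inclusion $N_g^b\hookrightarrow N_g^{b-1}$ (cap the boundary component with a disk and forget $\ast$). The induced map $j_*\colon H_1(N_g^b;\Z)\to H_1(N_g^{b-1};\Z)$ is surjective, with kernel generated by $[\delta_b]$, and it is equivariant for the mapping class actions. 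Hence if $f\in\I(N_g^b)$ acts trivially on $H_1(N_g^b;\Z)$, then $\F_g^{b-1}\C_g^b(f)$ acts trivially on $\mathrm{Im}\,j_*=H_1(N_g^{b-1};\Z)$, so it lies in $\I(N_g^{b-1})$.

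For $\supseteq$, I would first record that $\F_g^{b-1}\circ\C_g^b$ is surjective. Indeed $\F_g^{b-1}$ is onto, and although $\C_g^b$ is not onto --- its image being the index-two subgroup of $\M(N_g^{b-1},\ast)$ that preserves the local orientation at $\ast$, as reflected in the remark after Corollary~\ref{kernel-c-res} --- this defect is invisible after applying $\F_g^{b-1}$: a point-push $\Pu_g^{b-1}(\tilde x)$ along an orientation-reversing loop $\tilde x$ lies in $\ker\F_g^{b-1}$ yet reverses the local orientation at $\ast$, so multiplying by it pushes any class into $\mathrm{Im}\,\C_g^b$ without changing its image in $\M(N_g^{b-1})$. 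Now take $\phi\in\I(N_g^{b-1})$ and any lift $\hat\phi\in\M(N_g^b)$ with $\F_g^{b-1}\C_g^b(\hat\phi)=\phi$. By the computation used for $\subseteq$ and the fact that $\phi\in\I$, the class $\hat\phi$ acts on $H_1(N_g^b;\Z)$ by a transvection $v\mapsto v+k(v)[\delta_b]$ for a unique homomorphism $k\colon H_1(N_g^b;\Z)\to\Z$ (uniqueness because $[\delta_b]$ has infinite order); since $\hat\phi$ fixes every boundary curve pointwise, $k$ vanishes on all boundary classes.

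The key step is to cancel this transvection. The boundary-push of $\delta_b$ along a loop $\tilde x$ is a $\C_g^b$-preimage of $\Pu_g^{b-1}(\tilde x)$, hence lies in $\ker(\F_g^{b-1}\circ\C_g^b)$, and its homological action is again a transvection along $[\delta_b]$ whose coefficient functional is the co-oriented intersection pairing with $\tilde x$. I would show that, as $\tilde x$ ranges over $\pi_1(N_g^{b-1},\ast)$, these functionals realize every homomorphism $H_1(N_g^b;\Z)\to\Z$ vanishing on the boundary classes --- in particular $k$. Choosing a push $P$ with shear $k$, the element $f:=\hat\phi P^{-1}$ then satisfies $\F_g^{b-1}\C_g^b(f)=\phi$ and acts trivially on $H_1(N_g^b;\Z)$, so $f\in\I(N_g^b)$, giving $\phi\in\mathrm{Im}\,\F_g^{b-1}|_{\C_g^b(\I(N_g^b))}$.

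I expect the realizability claim in this last step to be the main obstacle, precisely because $N_g^b$ is non-orientable: the $\Z$-valued intersection pairing is not globally defined, so surjectivity of the ``push-to-functional'' map must be checked with care, the potential failure being a $2$-torsion discrepancy coming from the $\Z/2\Z$ summand of $H_1$. An alternative that sidesteps this computation is to invoke that $\I(N_g^{b-1})$ is normally generated by BSCC and BP maps (Theorem~\ref{pre-thm} when $b-1=0$, and the inductive hypothesis of the main theorem when $b-1\geq1$): each such generator is supported on curves that may be isotoped off the capping disk, hence lifts to a BSCC or BP map of $N_g^b$ lying in $\I(N_g^b)$; since $\F_g^{b-1}\circ\C_g^b$ is surjective and $\I(N_g^b)\trianglelefteq\M(N_g^b)$, the image $\F_g^{b-1}\C_g^b(\I(N_g^b))$ is a normal subgroup of $\M(N_g^{b-1})$ containing these normal generators, hence contains $\I(N_g^{b-1})$.
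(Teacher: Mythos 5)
Your first argument is, in outline, exactly the paper's proof. The paper likewise treats the inclusion $\mathrm{Im}\,\F_g^{b-1}|_{\C_g^b(\I(N_g^b))}\subseteq\I(N_g^{b-1})$ as immediate and concentrates on the reverse one: it takes an arbitrary lift $\psi\in\M(N_g^b)$ of $\varphi\in\I(N_g^{b-1})$, notes that $\psi_\ast(c_i)=c_i+n_id_b$, and cancels this transvection by boundary pushes of the $b$-th boundary component. The step you defer as ``the main obstacle'' is precisely where the paper works: it introduces the explicit pushes $\tau_{ij}=t_{\gamma_{ij}}t_{\widetilde{\gamma}_{ij}}$ of Figure~\ref{g_ij}, computes $(\tau_{ij})_\ast(c_i)=c_i-d_b$, $(\tau_{ij})_\ast(c_j)=c_j+d_b$, identity on the remaining generators, and then---rather than proving abstractly that every functional vanishing on the boundary classes is realizable---uses the relation $2(c_1+\cdots+c_g)+(d_1+\dots+d_b)=0$ to show that the coefficients of any lift automatically satisfy $\sum_{i=1}^gn_i=0$, so that the product $\tau_{g-1\,g}^{n_{g-1}}\cdots\tau_{1g}^{n_1}$ already cancels the transvection. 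This is the same content as your realizability claim, since $k(d_b)=0$ is equivalent to $2\sum_in_i=0$. Your worry about a $2$-torsion discrepancy does not materialize: $H_1(N_g^b;\Z)\cong\Z^{g+b-1}$ is torsion free for $b\geq1$ (the $\Z/2\Z$ summand occurs only for closed surfaces), and the factor $2$ enters only through $2\sum_in_i=0$, which over $\Z$ still yields $\sum_in_i=0$. So, modulo actually carrying out this computation, your first route is the paper's proof.

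Your alternative route is genuinely different, but its lifting step has a real gap. It is not true that a BP map $t_{c_1}t_{c_2}^{-1}$ of $N_g^{b-1}$ whose curves avoid the capping disk automatically lifts to an element of $\I(N_g^b)$: this depends on which complementary component of $c_1\cup c_2$ contains the disk. If the disk lies in the distinguished orientable component $S$ with $\partial S=c_1\cup c_2$, then in $N_g^b$ one has $[c_1]-[c_2]=\pm d_b\neq0$, while still $\langle v,c_1\rangle=\langle v,c_2\rangle$ for every class $v$ (because $\langle v,d_b\rangle=0$), so the lift acts on $H_1(N_g^b;\Z)$ by the nontrivial transvection $v\mapsto v\pm\langle v,c_1\rangle d_b$; it is not in $\I(N_g^b)$, and it is not a BP map of $N_g^b$ since $S$ has acquired a third boundary component. (The paper's own maps $\tau_{ij}$, which cap to the identity yet act nontrivially on $H_1(N_g^b;\Z)$, are exactly this phenomenon.) The repair is to choose representative curves so that $\ast$ lies in a complementary component other than $S$, which is always possible and does not change the mapping class downstairs; BSCC maps cause no such problem. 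Even with this repair, your argument invokes normal generation of $\I(N_g^{b-1})$ by BSCC and BP maps, i.e., Theorem~\ref{pre-thm} together with the inductive hypothesis of Theorem~\ref{main-thm}; that is admissible as a simultaneous induction on $b$, but it restructures the logic, whereas in the paper Proposition~\ref{im-cap-torelli-n-c-1} is proved independently of the main theorem and then feeds into its induction.
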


\begin{prop}\label{im-cap-torelli-n-c-2}
$\ker\F_g^{b-1}|_{\C_g^b(\I(N_g^b))}$ is equal to $\Pu_g^{b-1}(\Gamma_g^{b-1})$.
\end{prop}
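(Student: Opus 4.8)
The plan is to reduce the statement, via the exact sequence $\pi_1(N_g^{b-1},\ast)\overset{\Pu_g^{b-1}}{\longrightarrow}\M(N_g^{b-1},\ast)\overset{\F_g^{b-1}}{\longrightarrow}\M(N_g^{b-1})\to1$, to a purely homological question about pushing the boundary component $\delta_b$. Since exactness gives $\ker\F_g^{b-1}=\mathrm{Im}\,\Pu_g^{b-1}$, we have $\ker\F_g^{b-1}|_{\C_g^b(\I(N_g^b))}=\C_g^b(\I(N_g^b))\cap\mathrm{Im}\,\Pu_g^{b-1}$. For $x\in\pi_1(N_g^{b-1},\ast)$ let $\widetilde{x}\in\M(N_g^b)$ denote the mapping class obtained by pushing $\delta_b$ once along a representative of $x$; then $\C_g^b(\widetilde{x})=\Pu_g^{b-1}(x)$, so $\widetilde{x}$ is a lift of $\Pu_g^{b-1}(x)$. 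By Lemma~\ref{kernel-c} any two lifts differ by a power of $t_{\delta_b}\in\I(N_g^b)$, so $\Pu_g^{b-1}(x)$ lies in $\C_g^b(\I(N_g^b))$ if and only if $\widetilde{x}\in\I(N_g^b)$. Hence it suffices to prove $\{x\mid\widetilde{x}\in\I(N_g^b)\}=\Gamma_g^{b-1}$ and then apply $\Pu_g^{b-1}$.

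First I would compute the action of $\widetilde{x}$ on $H_1(N_g^b;\Z)$. Because $\widetilde{x}$ and $\Pu_g^{b-1}(x)$ are compatible with the inclusion $N_g^b\hookrightarrow N_g^{b-1}$ capping $\delta_b$, and $\Pu_g^{b-1}(x)$ acts trivially on $H_1(N_g^{b-1};\Z)$, the induced map $\widetilde{x}_*$ differs from the identity only in the direction of $[\delta_b]$, which generates the kernel of the capping map $H_1(N_g^b;\Z)\to H_1(N_g^{b-1};\Z)$. Thus $\widetilde{x}_*$ is recorded by the $[\delta_b]$-coefficients of $\widetilde{x}_*(c)-c$ as $c$ runs over a basis. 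Taking the crosscap cores $[\mu_1],\dots,[\mu_g]$ together with the boundary classes, I would track the dragged copy of $\delta_b$ as it is carried along $x$: each time the path traverses the $i$-th crosscap it passes $\delta_b$ across $[\mu_i]$, and since traversing a crosscap reverses the local orientation, the successive passages through a fixed crosscap contribute with alternating signs. The resulting coefficient on $[\mu_i]$ should be exactly $O_i(x)-E_i(x)$, while the boundary directions $y_j$ contribute $0$. Consequently $\widetilde{x}_*=\mathrm{id}$ precisely when $O_i(x)=E_i(x)$ for all $i$, i.e. when $x\in\Gamma_g^{b-1}$.

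This set equality yields both inclusions at once. For $x\in\Gamma_g^{b-1}$ the lift $\widetilde{x}$ is Torelli, so $\Pu_g^{b-1}(\Gamma_g^{b-1})\subseteq\ker\F_g^{b-1}|_{\C_g^b(\I(N_g^b))}$; concretely one expects the pushes along the natural generators of $\Gamma_g^{b-1}$ to be exactly the BSCC and BP maps $t_{\rho_i}$, $t_{\sigma_{ij}}$, $t_{\bar\sigma_{ij}}$, $\dots$ of Theorem~\ref{main-thm}, which are manifestly in $\I(N_g^b)$. Conversely, any element of the left-hand side is $\Pu_g^{b-1}(x)$ with $\widetilde{x}\in\I(N_g^b)$, which by the computation forces $x\in\Gamma_g^{b-1}$. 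Applying $\Pu_g^{b-1}$ gives the proposition.

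The main obstacle is the signed homological bookkeeping in the second step. On a non-orientable surface the intersection pairing is only defined modulo $2$, so one must orient everything locally and check carefully that the alternation of signs at successive crosscap crossings is governed by the \emph{parity of the position} in the reduced word $p(x)$ rather than by the exponents $\varepsilon_k$. Verifying that this really produces $O_i(x)-E_i(x)$ (and not, say, the signed exponent sum of $x_i$), that it is independent of the chosen representative, and that it is compatible with $\Pu_g^{b-1}$ being an anti-homomorphism is the delicate point; the fact that $\Gamma_g^{b-1}$ is a subgroup, so that $\Pu_g^{b-1}(\Gamma_g^{b-1})$ is again a group, should be used throughout as a consistency check.
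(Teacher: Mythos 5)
Your overall strategy is the same as the paper's: use $\ker\F_g^{b-1}=\mathrm{Im}\,\Pu_g^{b-1}$, note that any two lifts under $\C_g^b$ differ by a power of $t_{\delta_b}\in\I(N_g^b)$ (Corollary~\ref{kernel-c-res}), and thereby reduce the proposition to showing that the natural lift $\widetilde{x}$ of $\Pu_g^{b-1}(x)$ acts trivially on $H_1(N_g^b;\Z)$ exactly when $x\in\Gamma_g^{b-1}$; the paper, like you, only has to track the deviation from the identity in the $[\delta_b]$-direction. One point you gloss over: $\widetilde{x}$ exists only when the word length of $p(x)$ is even (pushing $\delta_b$ along an orientation-reversing loop reverses the local orientation at the capped disc, so $\Pu_g^{b-1}(x)$ is not in the image of $\C_g^b$ at all); the paper states this at the outset and works inside $\pi_1^+(N_g^{b-1},\ast)$. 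This is repairable, since both inclusions only ever involve such $x$, but it must be said for your set $\{x\mid\widetilde{x}\in\I(N_g^b)\}$ to be well defined.

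The genuine gap is the key homological computation, which you defer (``should be exactly $O_i(x)-E_i(x)$'', ``the delicate point'') and whose only concrete justification in your text is wrong as stated. You assert that ``successive passages through a fixed crosscap contribute with alternating signs.'' The correct rule, which you name only as something to be checked, is that the sign flips at \emph{every} crosscap traversal, whichever crosscap it is, so it is governed by the parity of the position in the whole word $p(x)$. These two rules genuinely differ: for $x=x_1x_2x_1x_2$ both passages through the first crosscap occur at odd positions (the local orientation is restored in between by the passage through the second crosscap), so they contribute with the \emph{same} sign and the lift sends $c_1\mapsto c_1-2d_b$, hence $x\notin\Gamma_g^{b-1}$ and $\widetilde{x}\notin\I(N_g^b)$. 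Your per-crosscap alternation rule would instead make $\widetilde{x}_\ast$ trivial, i.e.\ it would identify the kernel with the strictly larger subgroup of words crossing each crosscap an even number of times. Establishing the position-parity rule rigorously is exactly what the paper's proof consists of: it shows $(x_i^2)_\ast=(y_j)_\ast=1$, identifies $(x_ix_j)_\ast$ with the known action of $\tau_{ij}=t_{\gamma_{ij}}t_{\widetilde{\gamma}_{ij}}$ (namely $c_i\mapsto c_i-d_b$, $c_j\mapsto c_j+d_b$), proves that all these automorphisms commute (Lemma~\ref{ijkl}), and then writes $x_\ast=x_{i_1i_2}x_{i_3i_4}\cdots x_{i_{2l-1}i_{2l}}$ and collects the factors meeting a fixed index $i$ to get $x_\ast(c_i)=c_i+(E_i(x)-O_i(x))d_b$. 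Until you supply this (or an equally careful tracking argument, with the independence-of-representative check you mention), your outline is not a proof, and the one explicit sign rule it contains leads to the wrong subgroup.
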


\begin{prop}\label{p-g}
$\Pu_g^{b-1}(\Gamma_g^{b-1})$ is the normal closure of $\Pu_g^{b-1}(x_g^2)$, $\Pu_g^{b-1}(y_j)$ and $\Pu_g^{b-1}(x_gy_jx_g^{-1})$ for $1\leq{j}\leq{b-1}$ in $\C_g^b(\M(N_g^b))$.
\end{prop}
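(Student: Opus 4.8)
The plan is to transport the statement into the group $\pi_1(N_g^{b-1},\ast)$ via the standard point-pushing conjugation formula $h\,\Pu_g^{b-1}(x)\,h^{-1}=\Pu_g^{b-1}(h_\ast(x))$, valid for $h\in\C_g^b(\M(N_g^b))$ acting on $\pi_1$ by $h_\ast$. Since $N_g^{b-1}$ has negative Euler characteristic for $g\geq4$, the map $\Pu_g^{b-1}$ is injective, so the whole question can be read off inside $\pi_1(N_g^{b-1},\ast)$. The inclusion of the normal closure $G$ of $\Pu_g^{b-1}(x_g^2)$, $\Pu_g^{b-1}(y_j)$, $\Pu_g^{b-1}(x_gy_jx_g^{-1})$ into $\Pu_g^{b-1}(\Gamma_g^{b-1})$ is the easy half: by Proposition~\ref{im-cap-torelli-n-c-2} we have $\Pu_g^{b-1}(\Gamma_g^{b-1})=\ker\F_g^{b-1}|_{\C_g^b(\I(N_g^b))}=\C_g^b(\I(N_g^b))\cap\ker\F_g^{b-1}$, an intersection of two subgroups each normal in $\C_g^b(\M(N_g^b))$, hence itself normal; and a direct check of the $O_i/E_i$ condition shows $x_g^2,y_j,x_gy_jx_g^{-1}\in\Gamma_g^{b-1}$, so $G\subseteq\Pu_g^{b-1}(\Gamma_g^{b-1})$.

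The substance is the reverse inclusion, and I would begin by making $\Gamma_g^{b-1}$ concrete. The assignment $x_i\mapsto(e_i,1)$, $y_j\mapsto(0,0)$ defines a homomorphism $\phi\colon\pi_1(N_g^{b-1},\ast)\to\Z^g\rtimes\Z/2\Z$, where the nontrivial factor acts by $-1$; one checks that the vector part of $\phi(x)$ records exactly $(O_i(x)-E_i(x))_i$ while the $\Z/2\Z$-part is the parity $\varepsilon(x)$ of the number of $x$-letters in $p(x)$. Thus $\Gamma_g^{b-1}=\phi^{-1}(0\rtimes\Z/2\Z)$; it lies in $\ker\varepsilon$, and $\ker\varepsilon/\Gamma_g^{b-1}$ is the root lattice $\{v\in\Z^g:\sum_k v_k=0\}$. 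Two consequences of this description guide the rest. First, a one-line cocycle computation shows that every $\pi_1$-conjugate $wx_i^2w^{-1}$ and $wy_jw^{-1}$ again lies in $\Gamma_g^{b-1}$, and that every square $u^2$ of an element $u$ with $\varepsilon(u)$ odd lies in $\Gamma_g^{b-1}$. Second, I would prove that these elements---conjugates of the $y_j$, conjugates of the $x_i^2$, and squares of odd elements---generate $\Gamma_g^{b-1}$ as a subgroup, by an induction on word length that uses the balance condition $O_i=E_i$ to split off one such factor at each step.

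Next I would realize each of these generators inside $G$ through the mapping class action. Conjugation by point-pushing maps $\Pu_g^{b-1}(z)$, which lie in $\C_g^b(\M(N_g^b))$ as images of boundary pushes, produces those $\pi_1$-conjugates of the seeds that $\pi_1$-conjugation can reach; the extra generator $x_gy_jx_g^{-1}$ is listed precisely to supply the conjugate that the framing at $\ast$ (responsible for the non-surjectivity of $\C_g^b$) prevents the available pushes from producing directly. To pass from $x_g^2$ to an arbitrary $x_i^2$ and to the squares $u^2$ of odd $u$, I would use mapping classes that move $M$-circles: by the change-of-coordinates principle the $M$-circle through one crosscap represented by $x_g$ can be carried to any other $M$-circle (for instance the three-crosscap circle represented by $x_1x_2x_3$), so $h_\ast(x_g)$ ranges over conjugacy classes of odd elements and $h_\ast(x_g^2)=h_\ast(x_g)^2$ ranges over the required odd squares. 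Concretely I would fix an explicit finite generating set of $\C_g^b(\M(N_g^b))$ (Dehn twists and crosscap slides, via \cite{PS,Sz}), record its action on the $x_i$ and $y_j$, and verify that these actions suffice to reach the generators found in the previous paragraph.

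The main obstacle I expect is exactly this last matching step: showing that the automorphisms $h_\ast$ actually realizable by $\C_g^b(\M(N_g^b))$---a proper, framing-constrained subgroup of $\M(N_g^{b-1},\ast)$, since $\C_g^b$ is not surjective---are numerous enough to carry the three seed elements onto a generating set of $\Gamma_g^{b-1}$. This is where the genus hypothesis $g\geq4$ is used, and where the asymmetry between $y_j$ and $x_gy_jx_g^{-1}$ must be tracked carefully; the combinatorial bookkeeping of the invariant $O_i=E_i$ under these moves, rather than any single geometric input, is the part that will require the most care.
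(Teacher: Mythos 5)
Your overall architecture does parallel the paper's---reduce to a generation statement for $\Gamma_g^{b-1}$, then realize generators geometrically by conjugation inside $\C_g^b(\M(N_g^b))$---and your first paragraph is sound: the easy inclusion via normality of $\C_g^b(\I(N_g^b))\cap\ker\F_g^{b-1}$ is correct, as is your semidirect-product description of $\Gamma_g^{b-1}$. The genuine gap is in the realization step, and it rests on a false claim: that ``$h_\ast(x_g)$ ranges over conjugacy classes of odd elements'' as $h$ ranges over $\C_g^b(\M(N_g^b))$. A mapping class carries the class of a simple closed curve to the class of a simple closed curve of the same topological type (homeomorphic complement), so $h_\ast(x_g)$ can only reach classes of one-sided simple closed curves whose complement is homeomorphic to that of $x_g$, namely $N_{g-1}^b$. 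It never reaches non-simple odd elements such as $x_g^3$ or $x_1x_2x_1x_2x_1$, and not even all simple odd elements: for $g=5$ the five-crosscap curve representing $x_1x_2x_3x_4x_5$ has orientable complement, hence lies in a different orbit from $x_g$. Since your generating family for $\Gamma_g^{b-1}$ consists of conjugates of $y_j$, conjugates of $x_i^2$, and squares of \emph{arbitrary} odd elements, the change-of-coordinates argument cannot produce all of its members, and the verification you defer to the end (``fix an explicit generating set \dots and verify'') is not a bookkeeping matter---it is the substance of the proposition, left unproved.

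What is missing is the algebraic sharpening that makes every generator geometrically realizable, and this is exactly what the paper's proof supplies. The paper identifies $\Gamma_g^{b-1}$ as the kernel of $\pi_1(N_g^{b-1},\ast)\to\pi_g^{b-1}$ with relators $x_i^2$, $y_j$, $[x_{i(1)}x_{i(2)},x_{i(3)}x_{i(4)}]$ (Lemma~\ref{pi-lem1}); then---the key step for which you have no analogue---trades the commutator relators for the relators $(x_ix_jx_k)^2$ (Lemma~\ref{pi-lem2}); and finally applies the Reidemeister--Schreier method to the index-two subgroup $\pi_1^+(N_g^{b-1},\ast)$ (Lemmas~\ref{pi+-lem1} and~\ref{pi+-lem2}) to conclude that $\Gamma_g^{b-1}$ is normally generated in $\pi_1^+(N_g^{b-1},\ast)$ by the finite list $x_i^2$, $y_j$, $(x_ix_jx_k)^2$, $x_gx_i^2x_g^{-1}$, $x_gy_jx_g^{-1}$, $x_g(x_ix_jx_k)^2x_g^{-1}$. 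Every non-$y$ element of this list is the square of a loop through one or three crosscaps; such loops bound M\"obius bands whose complements are all homeomorphic to $N_{g-1}^b$ when $g\geq4$, and only for these does your change-of-coordinates argument (with conjugators chosen in $\C_g^b(\M(N_g^b))$, i.e.\ preserving the local orientation at $\ast$) yield conjugacy to $\Pu_g^{b-1}(x_g^2)$. Note also that your proposed word-length induction, if actually carried out, is forced to multiply by commutators of pair-products to rearrange letters, so it would need precisely the conversion of Lemma~\ref{pi-lem2}; without that computation the induction cannot terminate in realizable generators, and the proof does not close.
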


By Proposition~\ref{im-cap-torelli-n-c-1} and Proposition~\ref{im-cap-torelli-n-c-2}, we have the exact sequence
$$\Gamma_g^{b-1}\longrightarrow\C_g^b(\I(N_g^b))\longrightarrow\I(N_g^{b-1})\longrightarrow1.$$
Hence $\C_g^b(\I(N_g^b))$ is the normal closure of $\Pu_g^{b-1}(\Gamma_g^{b-1})$ and lifts by $\F_g^{b-1}|_{\C_g^b(\I(N_g^b))}$ of normal generators of $\I(N_g^{b-1})$.
In addition, by Proposition~\ref{p-g} we obtain the following.

\begin{cor}\label{im-cap-torelli-n-g}
In $\C_g^b(\M(N_g^b))$, $\C_g^b(\I(N_g^b))$ is normally generated by $\Pu_g^{b-1}(x_g^2)$, $\Pu_g^{b-1}(y_j)$, $\Pu_g^{b-1}(x_gy_jx_g^{-1})$ and lifts by $\F_g^{b-1}|_{\C_g^b(\I(N_g^b))}$ of normal generators of $\I(N_g^{b-1})$, for $1\leq{j}\leq{b-1}$.
\end{cor}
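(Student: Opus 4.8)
The plan is to derive the corollary formally from the exact sequence
$$\Gamma_g^{b-1}\overset{\Pu_g^{b-1}}{\longrightarrow}\C_g^b(\I(N_g^b))\overset{\F_g^{b-1}}{\longrightarrow}\I(N_g^{b-1})\longrightarrow1$$
coming from Propositions~\ref{im-cap-torelli-n-c-1} and~\ref{im-cap-torelli-n-c-2}, and then to substitute for $\Pu_g^{b-1}(\Gamma_g^{b-1})$ the explicit normal generators provided by Proposition~\ref{p-g}. I abbreviate $A:=\C_g^b(\M(N_g^b))$, $G:=\C_g^b(\I(N_g^b))$, $K:=\Pu_g^{b-1}(\Gamma_g^{b-1})$ and $\pi:=\F_g^{b-1}|_A$. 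The first thing I would record is that $G\trianglelefteq A$: since $\I(N_g^b)\trianglelefteq\M(N_g^b)$ and $\C_g^b$ maps $\M(N_g^b)$ onto $A$, the image $G$ is normal in $A$. Consequently any normal closure in $A$ of a subset of $G$ stays inside $G$, so all the normal-closure bookkeeping below is legitimate.

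The heart of the argument is a routine lemma about the surjection $\pi$. I would first note that $\pi$ maps $A$ onto all of $\M(N_g^{b-1})$: the composite $\F_g^{b-1}\circ\C_g^b$ is induced by capping the $b$-th boundary of $N_g^b$ with a disk, and this is surjective because every class in $\M(N_g^{b-1})$ has a representative that is the identity on a disk around $\ast$. By Proposition~\ref{im-cap-torelli-n-c-1}, $\pi(G)=\I(N_g^{b-1})$, and by Proposition~\ref{im-cap-torelli-n-c-2}, $\ker(\pi|_G)=K$. Now pick a set $\{\bar s_i\}$ normally generating $\I(N_g^{b-1})$ in $\M(N_g^{b-1})$ and, using surjectivity of $\pi|_G$, lift each $\bar s_i$ to some $s_i\in G$. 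I claim $G$ equals the normal closure $M$ of $\{s_i\}\cup K$ in $A$.

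Both inclusions are short. Since the generators lie in $G\trianglelefteq A$, we get $M\subseteq G$. For $G\subseteq M$, note $K\subseteq M$ and, because $\pi$ is surjective with $\pi(K)=1$, the image $\pi(M)$ is the normal closure of $\{\bar s_i\}$ in $\M(N_g^{b-1})$, namely $\I(N_g^{b-1})=\pi(G)$. Hence for any $f\in G$ there is $m\in M$ with $\pi(fm^{-1})=1$; then $fm^{-1}\in\ker(\pi|_G)=K\subseteq M$, so $f\in M$. This gives $G=M$, which is exactly the normal-closure description stated just before the corollary. Finally, Proposition~\ref{p-g} rewrites $K$ as the normal closure in $A$ of $\Pu_g^{b-1}(x_g^2)$, $\Pu_g^{b-1}(y_j)$ and $\Pu_g^{b-1}(x_gy_jx_g^{-1})$; since the normal closure of a union is unchanged when one part is replaced by a set that normally generates it, $G$ is the normal closure in $A$ of these three families together with the lifts $s_i$, which is the assertion of the corollary.

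In short, the corollary is pure assembly of the three propositions; the only places to be a little careful are the normality $G\trianglelefteq A$ and the surjectivity of $\pi$ onto the \emph{full} group $\M(N_g^{b-1})$ (needed so that ``normal generators of $\I(N_g^{b-1})$'' may legitimately be taken in $\M(N_g^{b-1})$ rather than in the a priori smaller $\pi(A)$). The substantive difficulty lies entirely in Propositions~\ref{im-cap-torelli-n-c-1}--\ref{p-g}, not in this deduction.
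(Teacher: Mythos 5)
Your proof is correct and follows essentially the same route as the paper: the corollary is obtained by combining the exact sequence $\Gamma_g^{b-1}\to\C_g^b(\I(N_g^b))\to\I(N_g^{b-1})\to1$ from Propositions~\ref{im-cap-torelli-n-c-1} and~\ref{im-cap-torelli-n-c-2} with the normal generators of $\Pu_g^{b-1}(\Gamma_g^{b-1})$ given by Proposition~\ref{p-g}. The paper treats this assembly as immediate, whereas you spell out the routine verifications (normality of $\C_g^b(\I(N_g^b))$ in $\C_g^b(\M(N_g^b))$, surjectivity of $\F_g^{b-1}$ onto the full group $\M(N_g^{b-1})$, and the normal-closure bookkeeping), all of which are accurate.
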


\subsection{Proof of Proposition~\ref{im-cap-torelli-n-c-1}}\

To prove Proposition~\ref{im-cap-torelli-n-c-1}, it suffices to show that for any $\varphi\in\I(N_g^{b-1})$ there is $\widetilde{\varphi}\in\I(N_g^b)$ such that $(\F_g^{b-1}\circ\C_g^b)(\widetilde{\varphi})=\varphi$.

For $f\in\M(N_g^b)$ we denote by $f_\ast$ the automorphism over $H_1(N_g^b;\Z)$ induced by $f$.
Since $f$ fixes each point of the boundary of $N_g^b$, we have that $f_\ast(d_j)=d_j$ for $1\leq{j}\leq{b}$.

For any $\varphi\in\I(N_g^{b-1})$ there exists $\psi\in\M(N_g^b)$ such that $(\F_g^{b-1}\circ\C_g^b)(\psi)=\varphi$.
For $1\leq{i}\leq{g}$ there is an integer $n_i$ such that $\psi_\ast(c_i)=c_i+n_id_b$.
Let $\gamma_{ij}$ and $\widetilde{\gamma}_{ij}$ be simple closed curves on $N_g^b$ as shown in Figure~\ref{g_ij}, and let $\tau_{ij}=t_{\gamma_{ij}}t_{\widetilde{\gamma}_{ij}}$, for $1\leq{i<j}\leq{g}$.
$\tau_{ij}$ is a mapping class which is described by pushing the $b$-th boundary component once along between $\gamma_{ij}$ and $\widetilde{\gamma}_{ij}$.
We can check
$$
(\tau_{ij})_\ast(c_t)=
\left\{
\begin{array}{ll}
c_i-d_b&(t=i),\\
c_j+d_b&(t=j),\\
c_t&(t\neq{i,j}).
\end{array}
\right.
$$

\begin{figure}[htbp]
\includegraphics[scale=0.5]{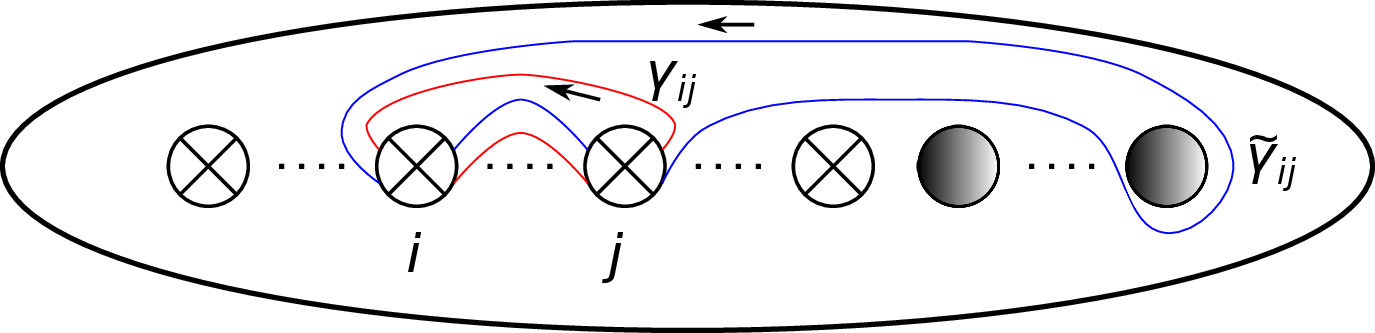}
\caption{The loops $\gamma_{ij}$ and $\widetilde{\gamma}_{ij}$.}\label{g_ij}
\end{figure}

Let $\tau=\tau_{g-1g}^{n_{g-1}}\cdots\tau_{2g}^{n_2}\tau_{1g}^{n_1}$ and $\widetilde{\varphi}=\tau\psi$.
Since $(\F_g^{b-1}\circ\C_g^b)(\tau)=1$ we have $(\F_g^{b-1}\circ\C_g^b)(\widetilde{\varphi})=\varphi$.
For $1\leq{i}\leq{g-1}$ we calculate
\begin{eqnarray*}
\widetilde{\varphi}_\ast(c_i)
&=&\tau_\ast(\psi_\ast(c_i))\\
&=&\tau_\ast(c_i+n_id_b)\\
&=&\tau_\ast(c_i)+n_i\tau_\ast(d_b)\\
&=&(\tau_{ig})^{n_i}_\ast(c_i)+n_id_b\\
&=&(c_i-n_id_b)+n_id_b\\
&=&c_i.
\end{eqnarray*}
In addition, we calculate
\begin{eqnarray*}
\widetilde{\varphi}_\ast(c_g)
&=&\tau_\ast(\psi_\ast(c_g))\\
&=&\tau_\ast(c_g+n_gd_b)\\
&=&\tau_\ast(c_g)+n_g\tau_\ast(d_b)\\
&=&(c_g+\sum_{k=1}^{g-1}n_kd_b)+n_gd_b\\
&=&c_g+\sum_{k=1}^gn_kd_b.
\end{eqnarray*}
Let $c=2(c_1+\cdots+c_g)+(d_1+\dots+d_b)(=0)$.
We see
\begin{eqnarray*}
\widetilde{\varphi}_\ast(c)
&=&2(\widetilde{\varphi}_\ast(c_1)+\cdots+\widetilde{\varphi}_\ast(c_g))+(\widetilde{\varphi}_\ast(d_1)+\cdots+\widetilde{\varphi}_\ast(d_b))\\
&=&2(c_1+\cdots+c_{g-1}+c_g+\sum_{k=1}^gn_kd_b)+(d_1+\dots+d_b)\\
&=&c+2\sum_{k=1}^gn_kd_b\\
&=&2\sum_{k=1}^gn_kd_b.
\end{eqnarray*}
Since $\widetilde{\varphi}_\ast(c)=0$ we have $\displaystyle\sum_{k=1}^gn_k=0$, and hence $\widetilde{\varphi}_\ast(c_g)=c_g$.
Hence we have that $\widetilde{\varphi}_\ast$ is the identity.
Therefore we conclude that $\widetilde{\varphi}$ is in $\I(N_g^b)$ with $(\F_g^{b-1}\circ\C_g^b)(\widetilde{\varphi})=\varphi$.
Thus we complete the proof of Proposition~\ref{im-cap-torelli-n-c-1}.

\subsection{Proof of Proposition~\ref{im-cap-torelli-n-c-2}}\

Note that $\ker\F_g^{b-1}|_{\C_g^b(\I(N_g^b))}$ is equal to the intersection of $\ker\F_g^{b-1}$ and $\C_g^b(\I(N_g^b))$.
For any $x\in\Gamma_g^{b-1}$, since $x_\ast=1$, we have that $\Pu_g^{b-1}(x)$ is in $\C_g^b(\I(N_g^b))$.
In addition, since $\Pu_g^{b-1}(x)$ is in $\ker\F_g^{b-1}$, we have that $\Pu_g^{b-1}(x)$ is in $\ker\F_g^{b-1}|_{\C_g^b(\I(N_g^b))}$.
Hence we conclude $\Pu_g^{b-1}(\Gamma_g^{b-1})\subset\ker\F_g^{b-1}|_{\C_g^b(\I(N_g^b))}$.

For any $\varphi\in\ker\F_g^{b-1}|_{\C_g^b(\I(N_g^b))}$, since $\varphi$ is in $\ker\F_g^{b-1}$ there exists $x\in\pi_1(N_g^{b-1},\ast)$ such that $\varphi=\Pu_g^{b-1}(x)$.
In addition, since $\varphi$ is in $\C_g^b(\I(N_g^b))$ we have $x_\ast=1$, and hence $x\in\Gamma_g^{b-1}$.
Hence $\varphi$ is in $\Pu_g^{b-1}(\Gamma_g^{b-1})$.
Therefore we conclude $\ker\F_g^{b-1}|_{\C_g^b(\I(N_g^b))}\subset\Pu_g^{b-1}(\Gamma_g^{b-1})$.

Thus we complete the proof of Proposition~\ref{im-cap-torelli-n-c-2}.

\subsection{Proof of Proposition~\ref{p-g}}\

We have the exact sequence
$$\pi_1^+(N_g^{b-1},\ast)\longrightarrow\C_g^b(\M(N_g^b))\longrightarrow\M(N_g^{b-1})\longrightarrow1.$$
Hence a normal generator of $\Pu_g^{b-1}(\Gamma_g^{b-1})$ in $\C_g^b(\M(N_g^b))$ is the image of a normal generator of $\Gamma_g^{b-1}$ in $\pi_1^+(N_g^{b-1},\ast)$.
Therefore we consider about the normal generators of $\Gamma_g^{b-1}$ in $\pi_1^+(N_g^{b-1},\ast)$.

We show the following three lemmas.

\begin{lem}\label{pi+-lem1}
For $b\geq1$, $\pi_1^+(N_g^{b-1},\ast)$ is generated by $x_ix_g^{-1}$, $x_gx_j$, $y_k$ and $x_gy_kx_g^{-1}$, where $1\leq{i}\leq{g-1}$, $1\leq{j}\leq{g}$ and $1\leq{k}\leq{b-1}$.
\end{lem}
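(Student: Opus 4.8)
The plan is to exhibit $\pi_1^+(N_g^{b-1},\ast)$ as an explicit index-$2$ subgroup of $\pi_1(N_g^{b-1},\ast)$ and then to read off a free basis from the Reidemeister--Schreier process. First I would note that sending $x_i\mapsto 1$ and $y_j\mapsto 0$ defines a homomorphism $w\colon\pi_1(N_g^{b-1},\ast)\to\Z/2\Z$: it is the composite of $p$ with the length-modulo-$2$ homomorphism on $\pi_1(N_g,\ast)$, which is well defined since the defining relator $x_1^2\cdots x_g^2$ has even length. By the definition of $p$ and of the word length, $\ker w$ is exactly $\pi_1^+(N_g^{b-1},\ast)$. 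As $w(x_1)=1$, the map $w$ is onto and $\pi_1^+(N_g^{b-1},\ast)$ has index $2$. Since $\pi_1(N_g^{b-1},\ast)$ is free on $x_1,\dots,x_g,y_1,\dots,y_{b-1}$, the Nielsen--Schreier theorem shows at once that $\pi_1^+(N_g^{b-1},\ast)$ is free, and the task reduces to naming a basis.

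The key step is to apply Reidemeister--Schreier with the Schreier transversal $T=\{1,x_g\}$ for the two cosets of $\ker w$; this is a valid transversal because $w(x_g)=1$. Writing $\overline{h}\in T$ for the representative of the coset of $h$, the Schreier generators are the nontrivial elements $t\,s\,\overline{ts}^{-1}$ as $t$ ranges over $T$ and $s$ over the free generators. A direct evaluation of the relevant cosets yields, for $t=1$, the elements $x_ix_g^{-1}$ (for $1\le i\le g$) and $y_j$, and, for $t=x_g$, the elements $x_gx_i$ (for $1\le i\le g$) and $x_gy_jx_g^{-1}$. The only element among these that reduces to the identity is $x_gx_g^{-1}$, arising from $t=1$, $s=x_g$. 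Deleting it leaves precisely $x_ix_g^{-1}$ for $1\le i\le g-1$, $x_gx_j$ for $1\le j\le g$, $y_k$ and $x_gy_kx_g^{-1}$ for $1\le k\le b-1$, and by Reidemeister--Schreier these freely generate $\pi_1^+(N_g^{b-1},\ast)$, which is exactly the statement of the lemma.

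The work here is essentially bookkeeping rather than conceptual, so the main obstacle is simply doing it cleanly: one must compute $\overline{ts}$ correctly in each of the four families in order to see which Schreier generator degenerates, and one must confirm that $w$ is genuinely surjective so that the index is $2$ and not $1$. The one point I would be most careful about is that the listed set is \emph{free} and not merely a generating set; this is what the appeal to Nielsen--Schreier guarantees, but it hinges on $\pi_1(N_g^{b-1},\ast)$ being free on $x_1,\dots,x_g,y_1,\dots,y_{b-1}$ and on the exact matching between the four families of Schreier generators produced above and the four families in the statement.
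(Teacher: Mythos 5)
Your proof is correct and takes essentially the same route as the paper's: both apply the Reidemeister--Schreier method with the Schreier transversal $\{1,x_g\}$, compute the same four families of Schreier generators, and discard the single degenerate one $x_gx_g^{-1}$ coming from $(t,s)=(1,x_g)$. The only difference is cosmetic --- you spell out the $\Z/2\Z$-valued homomorphism certifying that $\pi_1^+(N_g^{b-1},\ast)$ has index $2$, and you state explicitly the hypothesis that $\pi_1(N_g^{b-1},\ast)$ is free on $x_1,\dots,x_g,y_1,\dots,y_{b-1}$ (which both arguments rely on, and which requires the surface $N_g^{b-1}$ to have nonempty boundary), whereas the paper leaves both points implicit.
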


\begin{proof}
We use the Reidemeister Schreier method.
$\pi_1^+(N_g^{b-1},\ast)$ is an index $2$ subgroup of $\pi_1(N_g^{b-1},\ast)$.
Let $U=\{1,x_g\}$.
Remark that $U$ is a Schreier transversal for $\pi_1^+(N_g^{b-1},\ast)$ in $\pi_1(N_g^{b-1},\ast)$.
Let $X=\{x_1,\dots,x_g,y_1,\dots,y_{b-1}\}$.
For $u\in{U}$ and $x\in{X}$, $\overline{ux}=1$ if $(u,x)=(1,y_j)$ or $(x_g,x_i)$ (resp. $\overline{ux}=x_g$ if $(u,x)=(1,x_i)$ or $(x_g,y_j)$).
A generating set of $\pi_1^+(N_g^{b-1},\ast)$ is defined as $B=\{ux\overline{ux}^{-1}\mid{}u\in{U},x\in{X},ux\notin{U}\}$.
We see $1x_i\overline{1x_i}^{-1}=x_ix_g^{-1}$, $x_gx_j\overline{x_gx_j}^{-1}=x_gx_j$, $1y_k\overline{1y_k}^{-1}=y_k$, $x_gy_k\overline{x_gy_k}^{-1}=x_gy_kx_g^{-1}$, for $1\leq{i}\leq{g-1}$, $1\leq{j}\leq{g}$ and $1\leq{k}\leq{b-1}$.
Since $1x_g\overline{1x_g}^{-1}=x_gx_g^{-1}=1\in{U}$, $1x_g\overline{1x_g}^{-1}$ is not in $B$.
Thus we obtain the claim.
\end{proof}

\begin{lem}\label{pi+-lem2}
For $b\geq1$, $\Gamma_g^{b-1}$ is normally generated by $x_ix_g^{-1}\cdot{}x_gx_i$, $x_gx_g$, $y_j$, $x_gy_jx_g^{-1}$ and the commutators $[x_gx_k,x_gx_l]$ in $\pi_1^+(N_g^{b-1},\ast)$, where $1\leq{i}\leq{g-1}$, $1\leq{j}\leq{b-1}$ and $1\leq{k<l}\leq{g-1}$.
\end{lem}

\begin{proof}
Recall the homomorphism $\theta_g^{b-1}:\pi_1^+(N_g^{b-1},\ast)\to\Z^g$.
By the proof of Proposition~\ref{im-cap-torelli-n-c-1} we have
$$\mathrm{Im}~\theta_g^{b-1}=\{(n_1,\dots,n_g)\in\mathbb{Z}^g\mid{}n_1+\cdots+n_g=0\}\cong\mathbb{Z}^{g-1}.$$
$\mathrm{Im}~\theta_g^{b-1}$ is generated by the vectors $e_k$ with $1$ on position $k$, $-1$ on position $g$, and $0$ on elsewhere, for $1\leq{k}\leq{}g-1$.
Let $H$ be the normal closure in $\pi_1^+(N_g^{b-1},\ast)$ of the set of elements given in the lemma and $G$ be the quotient of $\pi_1^+(N_g^{b-1},\ast)$ by $H$.
Since $H$ is the subgroup of $\Gamma_g^{b-1}=\ker\theta_g^{b-1}$, there is an induced epimorphism $\theta^\prime:G\to\mathrm{Im}~\theta_g^{b-1}$.
By Lemma~\ref{pi+-lem1}, $G$ is an abelian group generated by $x_gx_k$ for $1\leq{k}\leq{}g-1$.
Note that $\theta^\prime(x_gx_k)=e_k$.
Since $\mathrm{Im}~\theta_g^{b-1}$ is a free abelian group of rank $g-1$, $\theta^\prime$ is an isomorphism.
Therefor we have $H=\Gamma_g^{b-1}$.
Thus we obtain the claim.
\end{proof}

\begin{lem}\label{pi+-lem3}
For $b\geq1$, $\Gamma_g^{b-1}$ is normally generated by $x_ix_g^{-1}\cdot{}x_gx_i$, $x_gx_g$, $y_j$, $x_gy_jx_g^{-1}$ and $(x_kx_lx_g)^2$ in $\pi_1^+(N_g^{b-1},\ast)$, where $1\leq{i}\leq{g-1}$, $1\leq{j}\leq{b-1}$ and $1\leq{k<l}\leq{g-1}$.
\end{lem}

\begin{proof}
For any $1\leq{k<l}\leq{g-1}$, modulo $x_1^2,x_2^2,\dots,x_g^2$ we see
\begin{eqnarray*}
[x_gx_k,x_gx_l]^{-1}
&=&
[x_gx_l,x_gx_k]\\
&=&
x_gx_lx_gx_kx_l^{-1}x_g^{-1}x_k^{-1}x_g^{-1}\\
&=&
x_gx_lx_gx_kx_l^{-1}x_g^{-1}x_k^{-1}x_lx_gx_g^{-1}x_l^{-1}x_g^{-1}\\
&\equiv&
x_gx_lx_g(x_kx_lx_g)^2x_g^{-1}x_l^{-1}x_g^{-1}
\end{eqnarray*}
Hence $(x_kx_lx_g)^2$ is conjugate to $[x_gx_k,x_gx_l]^{-1}$ modulo $x_1^2,x_2^2,\dots,x_g^2$.
By Lemma~\ref{pi+-lem2} we obtain the claim.
\end{proof}

By Lemma~\ref{pi+-lem3}, $\Pu_g^{b-1}(\Gamma_g^{b-1})$ is normally generated by $\Pu_g^{b-1}(x_i^2)$, $\Pu_g^{b-1}(y_j)$, $\Pu_g^{b-1}(x_gy_jx_g^{-1})$ and $\Pu_g^{b-1}((x_kx_lx_g)^2)$ for $1\leq{i}\leq{g-1}$, $1\leq{j}\leq{b-1}$ and $1\leq{k<l}\leq{g-1}$.
Representative loops of $x_i^2$ and $(x_kx_lx_g)^2$ bound a M\"obius band (see Figure~\ref{mobius}).
Therefore $\Pu_g^{b-1}(x_i^2)$ and $\Pu_g^{b-1}((x_kx_lx_g)^2)$ are conjugate to $\Pu_g^{b-1}(x_g^2)$.
Thus we complete the proof of Proposition~\ref{p-g}.

\begin{figure}[htbp]
\subfigure[Representative loops of $x_1^2,x_2^2,\dots,x_g^2$ bound a M\"obius band.]{\includegraphics[scale=0.5]{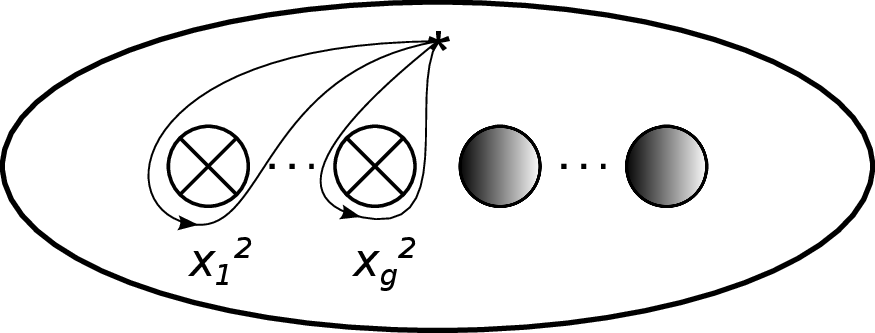}}
\subfigure[A representative loop of $(x_1x_2x_3)^2$ bounds a M\"obius band. Similarly a representative loop of $(x_kx_lx_g)^2$ bounds a M\"obius band for $1\leq{k<l}\leq{g-1}$.]{\includegraphics[scale=0.5]{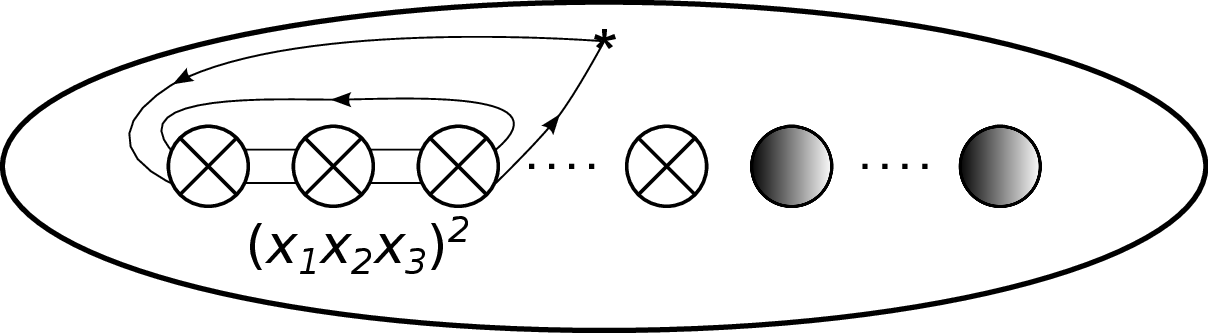}}
\caption{}\label{mobius}
\end{figure}

\section{Proof of Theorem~\ref{main-thm}}

We have the following short exact sequence:
$$1\to\ker\C_g^b|_{\I(N_g^b)}\to\I(N_g^b)\to\C_g^b(\I(N_g^b))\to1.$$
Hence $\I(N_g^b)$ is the normal closure of $\ker\C_g^b|_{\I(N_g^b)}$ and lifts by $\C_g^b$ of normal generators of $\C_g^b(\I(N_g^b))$.
By Corollary~\ref{kernel-c-res} and Corollary~\ref{im-cap-torelli-n-g}, we have that $\I(N_g^b)$ is normally generated by $t_{\delta_b}$, the lift $t_{\rho_b}$ by $\C_g^b$ of $\Pu_g^{b-1}(x_g^2)$, the lift $t_{\sigma_{jb}}t_{\delta_j}^{-1}$ by $\C_g^b$ of $\Pu_g^{b-1}(y_j)$, the lift $t_{\bar{\sigma}_{jb}}t_{\delta_j}^{-1}$ by $\C_g^b$ of $\Pu_g^{b-1}(x_gy_jx_g^{-1})$ and lifts by $(\F_g^{b-1}\circ\C_g^b)|_{\I(N_g^b)}$ of normal generators of $\I(N_g^{b-1})$.
We prove Theorem~\ref{main-thm} by induction on the number $b$ of the boundary components of $N_g^b$.
We take the natural lift by $(\F_g^{b-1}\circ\C_g^b)|_{\I(N_g^b)}$ of a normal generator of $\I(N_g^{b-1})$.

At first, $\I(N_g)$ is normally generated by $t_\alpha$, $t_\beta{}t_{\beta^\prime}^{-1}$ and $t_\gamma$ (see \cite{HK}).
Hence we have that $\I(N_g^1)$ is normally generated by $t_\alpha$, $t_\beta{}t_{\beta^\prime}^{-1}$, $t_\gamma$ and $t_{\delta_1}$, $t_{\rho_1}$.
Similarly we have that $\I(N_g^2)$ is normally generated by $t_\alpha$, $t_\beta{}t_{\beta^\prime}^{-1}$, $t_\gamma$, $t_{\delta_1}$, $t_{\rho_1}$ and $t_{\delta_2}$, $t_{\rho_2}$, $t_{\sigma_{12}}$, $t_{\bar{\sigma}_{12}}$.
For $b\geq3$, suppose that $\I(N_g^{b-1})$ is normally generated by $t_\alpha$, $t_\beta{}t_{\beta^\prime}^{-1}$, $t_\gamma$, $t_{\delta_i}$, $t_{\rho_i}$, $t_{\sigma_{ij}}$ and $t_{\bar{\sigma}_{ij}}$ for $1\leq{i,j}\leq{b-1}$ with $i<j$.
Then we have that $\I(N_g^b)$ is normally generated by $t_\alpha$, $t_\beta{}t_{\beta^\prime}^{-1}$, $t_\gamma$, $t_{\delta_i}$, $t_{\rho_i}$, $t_{\sigma_{ij}}$, $t_{\bar{\sigma}_{ij}}$ and $t_{\delta_b}$, $t_{\rho_b}$, $t_{\sigma_{kb}}$, $t_{\bar{\sigma}_{kb}}$ for $1\leq{i,j,k}\leq{b-1}$ with $i<j$.
Hence we obtain a normal generating set for $\I(N_g^b)$.
In particular, for $g\geq5$ since $\I(N_g)$ is normally generated by $t_\alpha$ and $t_\beta{}t_{\beta^\prime}^{-1}$ (see \cite{HK}), we do not need $t_\gamma$ as a normal generator of $\I(N_g^b)$ for $g\geq5$.
Finally, proving the following lemma, we finish the proof of Theorem~\ref{main-thm}.

\begin{lem}
$t_{\delta_b}$, $t_{\rho_b}$, $t_{\sigma_{kb}}$ and $t_{\bar{\sigma}_{kb}}$ are not needed as normal generators of $\I(N_g^b)$.
\end{lem}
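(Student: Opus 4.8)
The plan is to set $K:=\langle\langle\, t_\alpha,\ t_\beta t_{\beta'}^{-1},\ t_\gamma,\ t_{\delta_i},\ t_{\rho_i},\ t_{\sigma_{ij}},\ t_{\bar\sigma_{ij}}\ (1\le i<j\le b-1)\,\rangle\rangle$, the normal closure in $\M(N_g^b)$ of the proposed index-$(\le b-1)$ generators, and to prove that the four remaining classes $t_{\delta_b}$, $t_{\rho_b}$, $t_{\sigma_{kb}}$ and $t_{\bar\sigma_{kb}}$ all lie in $K$. Since $t_{\delta_k}\in K$ for $k\le b-1$, showing $t_{\sigma_{kb}}\in K$ is the same as showing the actual new generator $t_{\sigma_{kb}}t_{\delta_k}^{-1}$ (the lift of $\Pu_g^{b-1}(y_k)$) lies in $K$, and similarly for $t_{\bar\sigma_{kb}}$, so it is harmless to work with the bare twists. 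Because $K$ is normal in the \emph{full} group $\M(N_g^b)$, the change-of-coordinates principle shows that $K$ already contains $t_c$ for every curve $c$ in the $\M(N_g^b)$-orbit of one of the generating curves. I stress that this orbit is taken under diffeomorphisms fixing $\partial N_g^b$ pointwise, so it records exactly which boundary components a curve encloses: $K$ contains the twist about any curve cutting off the configuration of cross-caps cut off by $\alpha$ or $\gamma$ and enclosing no boundary, about any curve enclosing boundary $i$ together with the cross-cap configuration of $\rho_i$, and about any curve enclosing precisely the boundary pair $\{i,j\}$ with $i,j\le b-1$ (the types of $\sigma_{ij}$ and $\bar\sigma_{ij}$).

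The essential difficulty, and the reason this lemma is not a formal consequence of the previous sections, is that no element of $\M(N_g^b)$ moves the $b$-th boundary component; hence none of the four boundary-$b$ classes can be reached merely by conjugating an index-$(\le b-1)$ generator, and the reduction must come from genuine relations. The engine I would use is the lantern relation. Choose an embedded four-holed sphere $\Sigma\subset N_g^b$ one of whose boundary circles is $\delta_b$ (or, for the $\rho$- and $\sigma$-steps, a curve enclosing boundary $b$), and arrange the remaining three boundary circles of $\Sigma$ to cut off either a single further boundary component or a cross-cap configuration matching one already listed above, so that every curve produced by the lantern except the intended target has twist in $K$. Reading the lantern relation modulo $K$ then yields an identity in $\M(N_g^b)/K$ among the boundary-$b$ classes alone; for instance a four-holed sphere with boundaries $\delta_k$, $\delta_b$ and two separating curves cutting off cross-caps gives, schematically, a relation of the shape $t_{\delta_b}\equiv t_{\sigma_{kb}}\,t_{\rho_b}$ in $\M(N_g^b)/K$. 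Assembling a small family of such lanterns — varying which cross-cap or which auxiliary boundary is cut off, and using a cross-cap slide ($Y$-homeomorphism, available in $\M(N_g^b)$) to pass between the two routings that distinguish $\sigma_{kb}$ from $\bar\sigma_{kb}$ — should produce enough relations to solve the system and conclude that each of $t_{\delta_b}$, $t_{\rho_b}$, $t_{\sigma_{kb}}$, $t_{\bar\sigma_{kb}}$ is trivial in $\M(N_g^b)/K$, i.e.\ lies in $K$.

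I expect the main obstacle to be the isolation of the individual twists. A single lantern never expresses one boundary-$b$ twist in terms of elements of $K$; it expresses a \emph{product} of two (or more) of them, exactly as in the sample relation $t_{\delta_b}\equiv t_{\sigma_{kb}}t_{\rho_b}$. Choosing a set of lanterns whose relations, read in the quotient $\M(N_g^b)/K$, have a unique solution forcing every boundary-$b$ class to be trivial is the delicate combinatorial step, and it is here that non-orientability enters: the interior curves of a lantern meeting cross-cap $g$ occur in the two flavours recorded by $\sigma$ and $\bar\sigma$, and relating them requires the $Y$-homeomorphism conjugation rather than an orientation-preserving change of coordinates. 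Finally I would treat the small cases $b=1,2,3$ separately, since there are then too few boundary components to build the boundary part of these lanterns; the missing holes of each four-holed sphere must instead be supplied by curves cutting off cross-caps, which is possible precisely because $g\ge4$ furnishes enough cross-caps. This is also what pins down the hypothesis $g\ge4$, and the need to retain $t_\gamma$ when $g=4$, in the statement.
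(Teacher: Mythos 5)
Your framework agrees with the paper's --- pass to the normal closure $K$ of the retained generators, use change of coordinates to see that $K$ contains the twist about every curve of one of the listed topological types, and then kill the four boundary-$b$ classes by exhibiting relations in $\M(N_g^b)$ --- and lantern-type relations are indeed the right engine in spirit. But there is a concrete gap at your central step: no embedded lantern can produce a relation ``among the boundary-$b$ classes alone'' modulo $K$, so the finite system you hope to solve never closes up. Count the cross-caps and boundary components (``objects'') enclosed by the seven curves of a lantern one of whose boundary curves is $\delta_b$ (or $\rho_b$, or $\sigma_{kb}$). Every curve whose twist is known to lie in $K$ encloses at most two of the $g+b-1$ objects other than the $b$-th boundary (two cross-caps for $t_\alpha$, one boundary for $t_{\delta_i}$, cross-cap plus boundary for $t_{\rho_i}$, two boundaries for $t_{\sigma_{ij}},t_{\bar\sigma_{ij}}$), and each of the four target classes encloses at most one such object besides boundary $b$. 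The three complementary pieces of the lantern, however, must together contain all $g+b-1\geq 4$ objects, and a short case check shows that then some boundary curve or interior curve of the lantern encloses at least three objects; its twist is neither a conjugate of a retained generator nor a target class, so every lantern drags in a new unknown. (Your sample relation $t_{\delta_b}\equiv t_{\sigma_{kb}}t_{\rho_b}$ silently discards the fourth boundary curve of that lantern, which is exactly such an unknown; and in the one low-complexity case where a lantern does close up, $(g,b)=(4,1)$, it produces a relation between $t_{\delta_b}$ and \emph{two} further unknown $\rho$-type classes, so the system still does not terminate.) To repair the strategy you would have to prove first, say by induction on the number of enclosed objects, that the twist about \emph{every} separating curve disjoint from the $b$-th boundary lies in $K$ --- including curves enclosing arbitrarily many cross-caps and boundary components, and including the several mutually non-conjugate flavours of such curves that exist on a non-orientable surface. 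That induction is the real content of the lemma, and it is absent from your plan.

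The paper supplies this content in closed form rather than by induction: it introduces the curves $a_{ij}$, $b_{jk}$, $c_{kl}$ and their images under the boundary pushes $d_m$, observes that all of their twists are conjugate (up to inverse) to $t_\alpha$, $t_{\rho_k}$, $t_{\sigma_{kl}}$, $t_{\bar\sigma_{kl}}$, and then writes explicit pure-braid-style ``full twist'' formulas --- products over \emph{all} pairs of cross-caps and boundary components, corrected by a power of $t_{\delta_1}\cdots t_{\delta_{b-1}}$ --- expressing each of $t_{\delta_b}$, $t_{\rho_b}$, $t_{\sigma_{kb}}$ directly as a product of such twists; finally $t_{\bar\sigma_{kb}}$ is obtained as $t_{d_k(\sigma_{kb})}^{-1}$, using the boundary push $d_k$ rather than a $Y$-homeomorphism (note the inverse, a sign subtlety your sketch glosses over). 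These closed formulas are exactly the assembled version of the infinite family of lantern-type relations your approach would need, so the two routes are cousins; but what you defer as ``the delicate combinatorial step'' is where all the work lies, and the specific shape you predict for the relations is unattainable.
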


\begin{proof}
Let $a_{ij}$, $b_{jk}$ and $c_{kl}$ be simple closed curves on $N_g^b$ as shown in Figure~\ref{abc}, for $1\leq{i,j}\leq{g}$ and $1\leq{k,l}\leq{b-1}$.
Let $d_m$ be a diffeomorphism defined by pushing the $m$-th boundary component once along the loop as shown in Figure~\ref{d}.
Remark that the isotopy class of $d_m$ is not in $\M(N_g^b)$, since $d_m$ does not fix boundary.
$t_{a_{ij}}$ and $t_{d_m(a_{ij})}$ are conjugate to $t_\alpha$.
$t_{b_{jk}}$ and $t_{d_m(b_{jk})}$ are conjugate to ether $t_{\rho_k}$ or $t_{\rho_k}^{-1}$.
$t_{c_{kl}}$ and $t_{d_m(c_{kl})}$ are conjugate to $t_{\sigma_{kl}}$, with $m\neq{k,l}$.
$t_{d_k(c_{kl})}$ and $t_{d_l(c_{kl})}$ are conjugate to $t_{\bar{\sigma}_{kl}}^{-1}$ and $t_{\bar{\sigma}_{kl}}$, respectively.
Hence it suffices to show that $t_{\delta_b}$, $t_{\rho_b}$, $t_{\sigma_{kb}}$ and $t_{\bar{\sigma}_{kb}}$ are products of these Dehn twists.

\begin{figure}[htbp]
\includegraphics[scale=0.5]{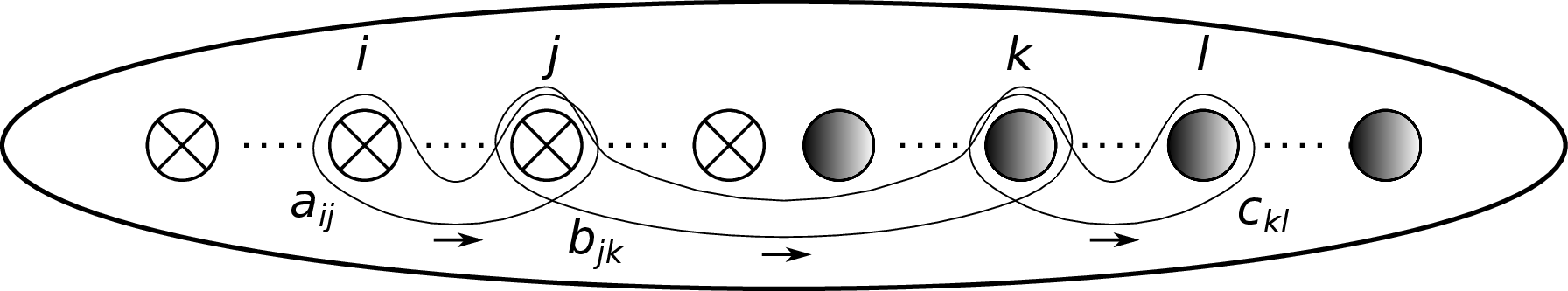}
\caption{The loops $a_{ij}$, $b_{jk}$ and $c_{kl}$.}\label{abc}
\end{figure}

\begin{figure}[htbp]
\includegraphics[scale=0.5]{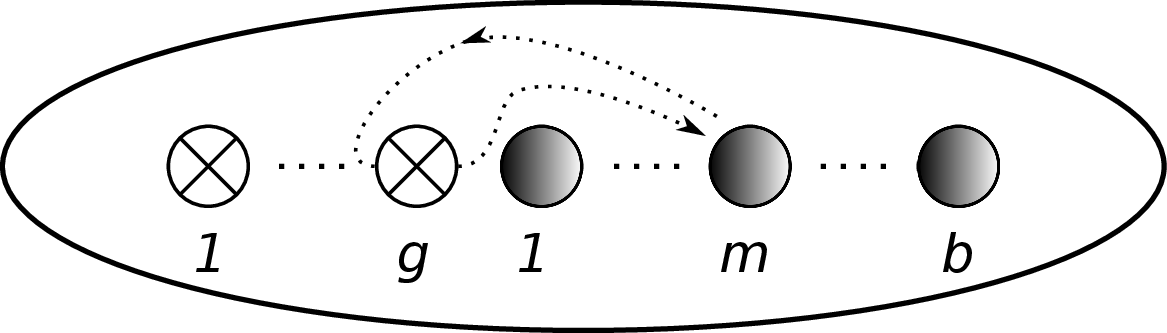}
\caption{The diffeomorphism $d_m$.}\label{d}
\end{figure}

For simplicity, we denote $t_{a_{ij}}=a_{i,j}$, $t_{b_{jk}}=b_{j,k}$, $t_{c_{kl}}=c_{k,l}$, $t_{d_m(a_{ij})}=a_{i,j;m}$, $t_{d_m(b_{jk})}=b_{j,k;m}$ and $t_{d_m(c_{kl})}=c_{k,l;m}$.
We remark that $a_{i,j}$, $b_{j,k}$ and $c_{k,l}$ are described as shown in Figure~\ref{abc-2}.

\begin{figure}[htbp]
\includegraphics[scale=0.5]{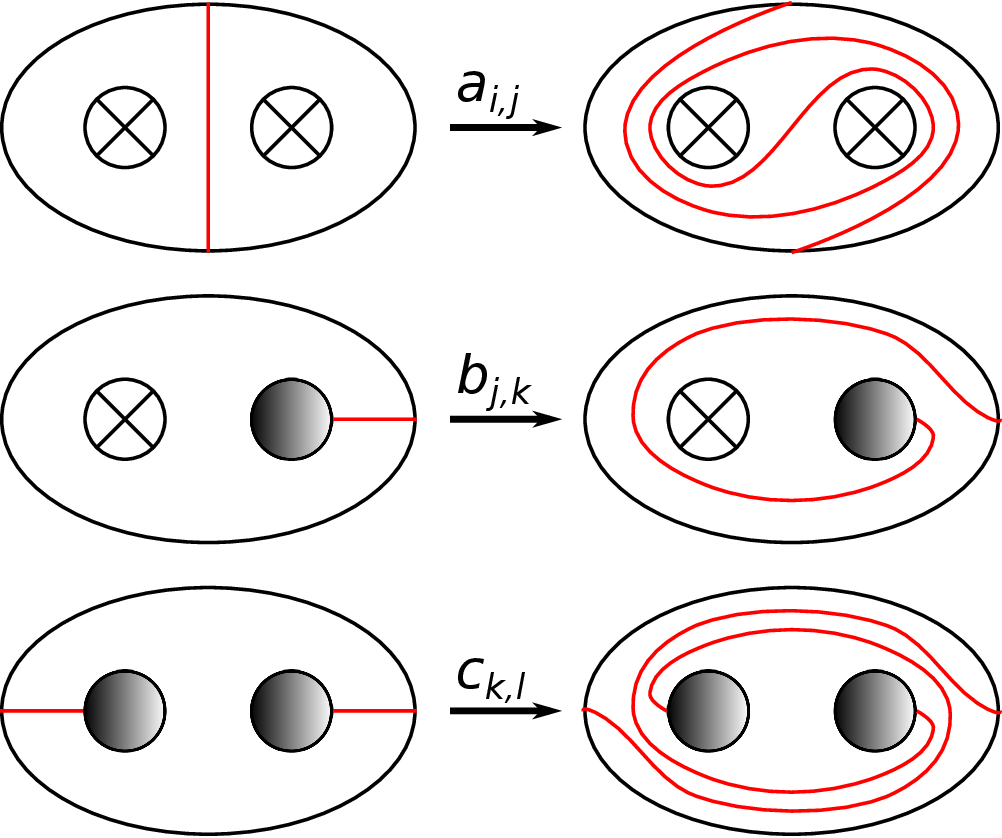}
\caption{}\label{abc-2}
\end{figure}

For $1\leq{i}\leq{g+b-1}$ let $D_i$ be a simple closed curve on $N_g^b$ as shown in Figure~\ref{D_i}.
Then we have $t_{\delta_b}=(t_{D_1}t_{D_2}^{-1})\cdots{}(t_{D_{g+b-2}}t_{D_{g+b-1}}^{-1})t_{D_{g+b-1}}$.
Note that $t_{D_1}=t_{\delta_b}$ and $t_{D_{g+b-1}}=t_{\delta_{b-1}}$.

\begin{figure}[htbp]
\includegraphics[scale=0.5]{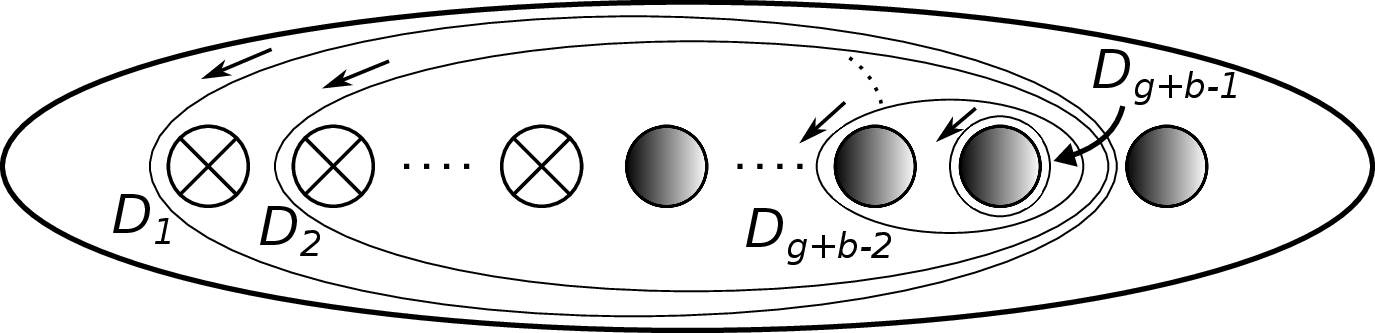}
\caption{The simple closed curves $D_1,D_2,\dots,D_{g+b-1}$.}\label{D_i}
\end{figure}

For $1\leq{i}\leq{g}$, $t_{D_i}t_{D_{i+1}}^{-1}$ is described by pushing the $i$-th crosscap once along between $D_i$ and $D_{i+1}$.
For $1\leq{j}\leq{b-2}$, $t_{D_{g+j}}t_{D_{g+j+1}}^{-1}$ is described by pushing the $j$-th boundary component once along between $D_{g+j}$ and $D_{g+j+1}$.
Similarly, $a_{i,j}$, $b_{j,k}t_{\delta_k}^{-1}$ and $c_{k,l}t_{\delta_{l}^{-1}}$ are described by pushing the $i$-th crosscap, $j$-th crosscap and the $k$-th boundary component around the $j$-th crosscap, the $k$-th boundary component and the $l$-th boundary component, respectively (see Figure~\ref{abc-2}).
Hence for $1\leq{i}\leq{g}$ and $1\leq{j}\leq{b-2}$, we have that $t_{D_i}t_{D_{i+1}}^{-1}$ and $t_{D_{g+j}}t_{D_{g+j+1}}^{-1}$ are explicitly described by products of Dehn twists as
\begin{eqnarray*}
t_{D_i}t_{D_{i+1}}^{-1}&=&a_{i,i+1}\cdots{}a_{i,g}\cdot(b_{i,1}t_{\delta_1}^{-1})\cdots(b_{i,b-1}t_{\delta_{b-1}}^{-1}),\\
t_{D_{g+j}}t_{D_{g+j+1}}^{-1}&=&(c_{j,j+1}t_{\delta_{j+1}}^{-1})\cdots(c_{j,b-1}t_{\delta_{b-1}}^{-1})\cdot{}t_{\delta_j}^{-(b-j-2)}.
\end{eqnarray*}
Therefore we have
\begin{eqnarray*}
t_{\delta_b}
&=&
(t_{\delta_1}\cdots{}t_{\delta_{b-1}})^{-g-b+3}\\
&&
(a_{1,2}\cdots{}a_{1,g}\cdot{}b_{1,1}\cdots{}b_{1,b-1})\cdots
(a_{g-1,g}\cdot{}b_{g-1,1}\cdots{}b_{g-1,b-1})
(b_{g,1}\cdots{}b_{g,b-1})\\
&&
(c_{1,2}\cdots{}c_{1,b-1})\cdots
(c_{b-3,b-2}c_{b-3,b-1})
(c_{b-2,b-1}).
\end{eqnarray*}
Similarly, $t_{\rho_b}$ and $t_{\sigma_{kb}}$ are explicitly described by products of Dehn twists as follows.
\begin{eqnarray*}
t_{\rho_b}
&=&
(t_{\delta_1}\cdots{}t_{\delta_{b-1}})^{-g-b+4}\\
&&
(a_{1,2}\cdots{}a_{1,g-1}\cdot{}b_{1,1}\cdots{}b_{1,b-1})\cdots
(a_{g-2,g-1}\cdot{}b_{g-2,1}\cdots{}b_{g-2,b-1})
(b_{g-1,1}\cdots{}b_{g-1,b-1})\\
&&
(c_{1,2}\cdots{}c_{1,b-1})\cdots
(c_{b-3,b-2}c_{b-3,b-1})
(c_{b-2,b-1}),\\
t_{\sigma_{kb}}
&=&
(t_{\delta_1}\cdots{}t_{\delta_{k-1}}\cdot{}t_{\delta_{k+1}}\cdots{}t_{\delta_{b-1}})^{-g-b+4}\\
&&
(a_{1,2}\cdots{}a_{1,g}\cdot{}b_{1,1}\cdots{}b_{1,k-1}\cdot{}b_{1,k+1}\cdots{}b_{1,b-1})\cdots\\
&&
(a_{g-1,g}\cdot{}b_{g-1,1}\cdots{}b_{g-1,k-1}\cdot{}b_{g-1,k+1}\cdots{}b_{g-1,b-1})
(b_{g,1}\cdots{}b_{g,k-1}\cdot{}b_{g,k+1}\cdots{}b_{g,b-1})\\
&&
(c_{1,2}\cdots{}c_{1,k-1}\cdot{}c_{1,k+1}\cdots{}c_{1,b-1})\cdots
(c_{k-2,k-1}\cdot{}c_{k-2,k+1}\cdots{}c_{k-2,b-1})\\
&&
(c_{k-1,k+1}\cdots{}c_{k-1,b-1})
(c_{k+1,k+2}\cdots{}c_{k+1,b-1})\cdots
(c_{b-3,b-2}c_{b-3,b-1})(c_{b-2,b-1}).
\end{eqnarray*}
In addition, since $t_{\bar{\sigma}_{kb}}=t_{d_k(\sigma_{kb})}^{-1}$, $t_{\bar{\sigma}_{kb}}$ is a product of $a_{i,j;k}$, $b_{i,j;k}$, $c_{i,j;k}$ and $t_{\delta_i}$.

Thus we obtain the claim.
\end{proof}

\section*{Acknowledgement}
The author would like to express his thanks to Genki Omori for his valuable suggestions and useful comments.


\end{document}